\newtheorem{proposition}{Proposition}
\newtheorem{theorem}{Theorem}
\newenvironment{proof}{{\bf Proof:}}{$\text{ }\blacksquare$}
\begin{document}

\title{Simplicial approach to derived differential manifolds}
\author{Dennis Borisov, Justin Noel\\ Max-Planck Institute for Mathematics, Bonn, Germany}
\date{\today}
\maketitle

\begin{abstract} 
\noindent Derived differential manifolds are constructed using the usual homotopy theory of simplicial rings of smooth functions. They are proved to be equivalent to derived differential manifolds of finite type, constructed using homotopy sheaves of homotopy rings (D.Spivak), thus preserving the classical cobordism ring. This reduction to the usual algebraic homotopy can potentially lead to virtual fundamental classes beyond obstruction theory.\end{abstract}

\tableofcontents

\section{Introduction}

\underline{The objective} of this paper is to simplify the theory of derived smooth manifolds, developed in \cite{Sp10}, so that one can work with derived manifolds using just simplicial $C^\infty$-rings, instead of homotopy sheaves of homotopy $C^\infty$-rings. 

This complete elimination of sheaf theoretic techniques is possible because the usual softness of structure sheaves on classical manifolds remains true after deriving: every derived differential manifold of finite type\footnote{A derived manifold is of finite type, if it is possible to embed its classical part into $\mathbb R^n$ for some $n\geq0$.} (as defined in \cite{Sp10}) is locally weakly equivalent to a derived manifold, whose structure sheaf is a simplicial diagram of soft sheaves.

\smallskip

In \cite{Sp10} it is proved, that derived differential manifolds, constructed there, have the same cobordism ring as classical manifolds, with intersections being performed by taking homotopy limits. 

Our reduction preserves this property, and thus we provide a model for intersection theory of differential manifolds, using the usual simplicial closed model structure on the category of simplicial $C^\infty$-rings. Intersections are obtained by taking the usual homotopy colimits.

This ability to express intersections homotopically correctly and functorially, by using a single simplicial ring, instead of a homotopy sheaf of such rings, potentially allows one to go beyond obstruction theories in working with virtual fundamental classes. 

\bigskip

\underline{Here is our approach:} we use $C^\infty$-rings as the basis for everything we do in smooth geometry. Such rings are just algebras over a particular algebraic theory (\cite{La63}), we recall the definition at the beginning of Section \ref{FirstSection}. The theory of $C^\infty$-rings is well developed (e.g. \cite{MR91}, \cite{Du81}, \cite{GS03}, \cite{Jo11}, and many others).

Homotopy theory is defined on the category of simplicial $C^\infty$-rings, i.e. simplicial diagrams of $C^\infty$-rings. These rings inherit a simplicial closed model structure from the category of simplicial sets (\cite{Qu67}), and it is this homotopy theory that we use. We recall the definition in Section \ref{SecondSection}.

\smallskip

In \cite{Sp10} homotopy is based on the notion of local weak equivalence. Given a topological space $X$, one considers the category of sheaves of simplicial $C^\infty$-rings, and defines local weak equivalence to be a map that induces an isomorphism between {\it sheaves} of the corresponding homotopy groups. 

This requirement to have an isomorphism on the level of sheaves of homotopy groups and not merely pre-sheaves, is responsible for the local nature of local weak equivalences. In the simplicial setting this notion was defined in \cite{Ja87}. To work with this homotopy theory in terms of closed model categories, one needs hypercovers \cite{DHI}, and they are used in \cite{Sp10}, resulting in the notion of a homotopy sheaf of homotopy $C^\infty$-rings. We recall the definition in Section \ref{ThirdSection}.

\smallskip

Different from algebraic geometry, local weak equivalences in differential geometry can be treated in a very simple manner. Using softness of structure sheaves, we prove that every derived manifold of finite type is locally weakly equivalent to a derived manifold, whose presheaves of homotopy groups are already sheaves. This implies that the functor of global sections maps local weak equivalences to weak equivalences, and so we can go from sheaves to simplicial $C^\infty$-rings without loosing any homotopical structure.

In \cite{Sp10}, instead of pre-sheaves of $C^\infty$-rings, pre-sheaves of {\it homotopy} $C^\infty$-rings are used, i.e. one requires that structure equations are not literally satisfied, but only up to homotopy. One can rectify such homotopy rings into honest $C^\infty$-rings, and it can be done in terms of a Quillen equivalence (\cite{Ba02}, \cite{Be06}). Thus we get the following diagram of adjunctions:
	\begin{equation}\label{ThePlan}\xymatrix{\text{Sheaves of simplicial }C^\infty\text{-rings}\ar@{<->}[r]\ar@{<->}[rd]_{\mathbf \Gamma} & \text{Spivak's theory}\\
	& \text{Simplicial }C^\infty\text{-rings} }\end{equation}
where the categories in the first row come with local weak equivalences, the third category has the usual weak equivalences. Assuming all manifolds are of finite type, each functor maps (local) weak equivalences to (local) weak equivalences. Moreover, all units and counits are (local) weak equivalences.

Going over to simplicial localizations of these categories, we conclude that these adjunctions induce weak equivalences. In \cite{Sp10} derived manifolds are defined by gluing affine derived manifolds, which are just intersections, computed as homotopy limits. Weak equivalences of simplicial localizations, given by (\ref{ThePlan}), show that all of this can be done in the category of simplicial $C^\infty$-rings, using the usual closed model structure.

\bigskip

\underline{Here is the plan} of the paper:  in Section \ref{FirstSection} we describe the correspondence between $C^\infty$-spaces and $C^\infty$-rings. Using softness of the structure sheaves, one proves that this is an equivalence of categories. This material is standard, and we provide it mostly to fix the notation.

In Section \ref{SecondSection} we extend the classical results from Section \ref{FirstSection} to the simplicial case. We show that the standard model category of simplicial $C^\infty$-rings is a model for the category of simplicial $C^\infty$-spaces, where gluing is performed by local weak equivalences.

In Section 4 we recall the construction from \cite{Sp10}, and (assuming finite type) prove that it provides a model for simplicial $C^\infty$-spaces. Thus \cite{Sp10} is reduced, via the category of simplicial $C^\infty$-spaces, to the usual construction, involving homotopy colimits in the category of simplicial $C^\infty$-rings. This correspondence, restricted to good fibrant replacements on the side of \cite{Sp10}, is just the functor of global sections.

\bigskip

We would like to indicate some differences between the approach to derived geometry adopted here, and other places. When one glues by weak equivalences, one can choose the underlying topological spaces to be spectra of the $0$-th homotopy group or the $0$-th component of the structure sheaf. We (and \cite{Sp10}) use the former, while \cite{CK01},\cite{CK02},\cite{TV05},\cite{TV08} use the latter.

In \cite{Jo11b} another approach to derived manifolds is used, where instead of sheaves of simplicial $C^\infty$-rings, one uses sheaves of presentations of $C^\infty$-rings. We believe that constructions of \cite{Jo11} can be obtained from ours by truncating simplicial sets from level $2$ and up. In particular, this would allow one to reformulate the theory of \cite{Jo11b} in terms of $C^\infty$-rings, and their modules. We will pursue this elsewhere.

\bigskip

{\bf Acknowledgements:} This work appeared as a result of Derived Differential Geometry Seminar at Max Planck Institute for Mathematics, in the Summer-Fall, 2011. We would like to thank all the participants for many fruitful discussions in and outside the seminar. The first author is especially grateful to Barbara Fantechi and Timo Sch\"{u}rg for explaining the correspondence between obstruction theory and transversality of derived intersections. 


\section{$C^\infty$-rings and $C^\infty$-spaces}\label{FirstSection}

Let $\mathcal C^\infty\mathcal R$ be the category of product preserving functors 
	\begin{equation}\xymatrix{A:\mathcal C^\infty\ar[r] & Set,}\end{equation} 
where $\mathcal C^\infty$ has $\{\mathbb R^n\}_{n\geq 0}$ as objects, and smooth maps as morphisms. Clearly, any such $A$ is determined (up to a unique isomorphism) by the set $A(\mathbb R)$ and the action of $\{C^\infty(\mathbb R^n)\}_{n\geq 0}$ on $A(\mathbb R)$, making it into a \underline{$C^\infty$-ring}. We will write $A$ to mean both the functor and the corresponding $C^\infty$-ring.

As an example consider a smooth manifold $X$, i.e. a Hausdorff, second countable space with smooth Euclidean atlas of bounded dimension. The set of smooth functions on $X$ is a $C^\infty$-ring, moreover the assignment $X\mapsto C^\infty(X)$ is a full and faithful functor (e.g. \cite{MR91}).

\smallskip

The forgetful functor $\mathcal C^\infty\mathcal R\rightarrow Set$, defined by $A\mapsto A(\mathbb R)$, has a left adjoint 
	\begin{equation}\xymatrix{S\ar@{|->}[r] & C^\infty(\mathbb R^S),}\end{equation}
where $C^\infty(\mathbb R^S)$ is the ring of \underline{smooth finite functions} on $\mathbb R^S:=Hom_{Set}(S,\mathbb R)$, i.e. functions that factor through a projection $\mathbb R^S\rightarrow\mathbb R^F$, with $F\subseteq S$ finite, and a smooth function $\mathbb R^F\rightarrow\mathbb R$.

A $C^\infty$-ring $A$ is \underline{local}, if $A$ has a unique maximal ideal $\mathfrak m\subset A$, and $A/\mathfrak m\cong\mathbb R$. A typical example of a local $C^\infty$-ring is the ring of germs of smooth functions at the origin of $\mathbb R^n$. A $C^\infty$-ring $A$ is \underline{finitely generated} if $A$ is a quotient of $C^\infty(\mathbb R^n)$. Following \cite{MR91}, we will denote the full subcategory of $\mathcal C^\infty\mathcal R$, consisting of finitely generated $C^\infty$-rings, by $\mathcal L$. For any smooth manifold (second countable and of finite dimension), the $C^\infty$-ring of smooth functions is finitely generated (e.g. \cite{MR91}).

\smallskip

\underline{A $C^\infty$-space}\label{ClassicalDefinition} is a pair $(X,\mathcal O_X)$, where\begin{itemize}
\item[1.] $X$ is a Hausdorff topological space,
\item[2.] $\mathcal O_X$ is a soft sheaf of finitely generated $C^\infty$-rings on $X$,
\item[3.] $\forall p\in X$, the stalk $(\mathcal O_X)_p$ is a local $C^\infty$-ring.\end{itemize}
A morphism of $C^\infty$-spaces $(X,\mathcal O_X)\rightarrow(Y,\mathcal O_Y)$ is given by a pair $(\phi,\phi^\sharp)$, where $\phi:X\rightarrow Y$ is a continuous map, and $\phi^\sharp:\mathcal O_Y\rightarrow\phi_*(\mathcal O_X)$ is a morphism of sheaves of $C^\infty$-rings.\footnote{Note, that the corresponding morphisms between stalks are automatically local, since they are morphisms of local $\mathbb R$-algebras, that have $\mathbb R$ as the residue field.} We will denote the category of $C^\infty$-spaces by $\mathbb G$. 

Note that we require the structure sheaf to be a sheaf of finitely generated $C^\infty$-rings, i.e. our $C^\infty$-spaces are \underline{of finite type}. This requirement is equivalent to demanding that a given space is embeddable into some $\mathbb R^n$, or, in the case of manifolds, that the dimension is finite.

\smallskip

There is an obvious functor 
	\begin{equation}\Gamma:\mathbb G\rightarrow\mathcal L^{op},\quad (X,\mathcal O_X)\mapsto\Gamma(X,\mathcal O_X).\end{equation}
This functor has a right adjoint (\cite{Du81}, theorem 8), that we now construct. Let $A\in\mathcal L$, \underline{spectrum of $A$} is the $C^\infty$-space $(Sp(A),\mathcal O_{Sp(A)})$, where 
	\begin{equation}Sp(A):=Hom_{\mathcal L}(A,\mathbb R),\end{equation} 
equipped with \underline{Zariski topology}, and $\mathcal O_{Sp(A)}$ is given by localization. Here is the explicit description: a basic open subset of $Sp(A)$ is 
	\begin{equation}\label{BasicOpen}U_a:=\{p:A\rightarrow\mathbb R\text{ s.t. }p(a)\neq 0\},\quad a\in A.\end{equation}
Clearly $U_{a_1}\cap U_{a_2}=U_{a_1a_2}$, and hence $\{U_a\}_{a\in A}$ is indeed a basis of a topology. Choosing a presentation $A\cong C^\infty(\mathbb R^n)/\mathfrak A$, one can identify
	\begin{equation}Sp(A)\cong\{p:C^\infty(\mathbb R^n)\rightarrow\mathbb R\text{ s.t. }p(\mathfrak A)=0\},\end{equation}
i.e. $Sp(A)$ consists of \underline{the zeroes of $\mathfrak A$} in $\mathbb R^n$. Every open subset of $\mathbb R^n$ has a characteristic function (\cite{MR91}, lemma I.1.4), therefore, Zariski topology on $Sp(A)$ coincides with the topology, induced from $\mathbb R^n$, and every open subset of $Sp(A)$ is of the form (\ref{BasicOpen}). 

For each $a\in A$, let $A\{a^{-1}\}$ be the \underline{smooth localization of $A$} at $a\neq 0$, i.e. it is obtained by universally inverting (in the category of $C^\infty$-rings) every $a'\in A$, s.t. $\forall p\in Sp(A)$ $p(a)\neq 0\Rightarrow p(a')\neq 0$. We define $\mathcal O_{Sp(A)}$ to be the sheaf, associated to $U_a\mapsto A\{a^{-1}\}$. 

There is another description of $\mathcal O_{Sp(A)}$. Let $\mathcal O_{\mathbb R^n}$ be the sheaf of $C^\infty$-functions on $\mathbb R^n$, and let $\mathfrak a\subseteq\mathcal O_{\mathbb R^n}$ be subsheaf of ideals, defined as follows:
	\begin{equation}f\in\Gamma(U,\mathfrak a)\text{ if and only if }\forall p\in U, f_p\in\mathfrak A_p.\end{equation}
Denoting by $\iota:Sp(A)\subseteq\mathbb R^n$ the inclusion, given by $C^\infty(\mathbb R^n)\rightarrow A$, it is easy to see that 
	\begin{equation}\label{SheafPullback}\mathcal O_{Sp(A)}\cong\iota^*(\mathcal O_{\mathbb R^n})/\iota^*(\mathfrak a),\end{equation} 
and hence $\mathcal O_{Sp(A)}$ is soft, and its stalks are local $C^\infty$-rings. 

Finally, being a subspace of $\mathbb R^n$, $Sp(A)$ is clearly Hausdorff.  Therefore $(Sp(A),\mathcal O_{Sp(A)})\in\mathbb G$, and universal property of localization gives us a functor 
	\begin{equation}\mathbf{Sp}:\mathcal L^{op}\rightarrow\mathbb G,\quad A\mapsto(Sp(A),\mathcal O_{Sp(A)}).\end{equation}
In general, it is not true that $A\cong\Gamma(Sp(A),\mathcal O_{Sp(A)})$. Consider the following example: let $A:=C^\infty(\mathbb R^2)/\mathfrak A$, where $\mathfrak A$ consists of functions that vanish in some product neighborhood of the $y$-axis. Then $\Gamma(Sp(A),\mathcal O_{Sp(A)})=C^\infty(\mathbb R^2)/\widetilde{\mathfrak A}$, where $\widetilde{\mathfrak A}$ is the ideal of functions that vanish in some (arbitrary) neighborhood of the $y$-axis. Clearly $\mathfrak A\neq\widetilde{\mathfrak A}$. 

\smallskip

This example can be generalized. An ideal $\mathfrak A\subseteq C^\infty(\mathbb R^n)$ is called \underline{germ determined},\footnote{In \cite{Du81} such ideals are called {\it ideals of local character}, we adopt the terminology from \cite{MR91}.} if 
	\begin{equation}\forall f\in C^\infty(\mathbb R^n)-\mathfrak A,\exists p\in\mathbb R^n\text{ s.t. }f_p\notin\mathfrak A_p.				\end{equation}
A $C^\infty$-ring $A$ is germ determined,  if $A\cong C^\infty(\mathbb R^n)/\mathfrak A$, with $\mathfrak A$ being germ determined. We will denote the full subcategory of $\mathcal L$, consisting of germ determined $C^\infty$-rings, by $\mathcal G$. The inclusion $\mathcal G\subset\mathcal L$ has a left adjoint, that we will denote by $A\mapsto\widetilde{A}$. Explicitly: $\widetilde{A}\cong C^\infty(\mathbb R^n)/\widetilde{\mathfrak A}$, where $\widetilde{\mathfrak A}$ is the germ determined ideal, generated by $\mathfrak A$. Proof of the following proposition is straightforward (use (\ref{SheafPullback})).

\begin{proposition} \label{SpecGerm} Let $A\in\mathcal L$, then $\forall a\in A$, $\Gamma(U_a,\mathcal O_{Sp(A)})\cong\widetilde{A\{a^{-1}\}}$.\end{proposition}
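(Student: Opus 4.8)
\noindent The plan is to reduce the statement to a computation of global sections of a structure sheaf over a closed subset of a smooth manifold, and to carry out that computation with a single partition of unity. Fix a presentation $A\cong C^\infty(\mathbb R^n)/\mathfrak A$, choose a lift $\bar a\in C^\infty(\mathbb R^n)$ of $a$, and put $W:=\{x\in\mathbb R^n\text{ s.t. }\bar a(x)\neq 0\}$, an open and hence second countable, finite dimensional submanifold of $\mathbb R^n$. Under the identification $Sp(A)\cong Z(\mathfrak A)\subseteq\mathbb R^n$ of the excerpt one has $U_a=Sp(A)\cap W=Z(\mathfrak A')$ with $\mathfrak A':=\mathfrak A\cdot C^\infty(W)$, so $U_a$ is a \emph{closed} subset of $W$. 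The standard description of smooth localization — inverting $\bar a$ amounts to restricting to $W$, and localization commutes with the quotient by $\mathfrak A$ — gives $A\{a^{-1}\}\cong C^\infty(W)/\mathfrak A'$; applying the universal property of $\widetilde{(-)}$ to this presentation (and using that $C^\infty$-rings of manifolds are germ determined) yields $\widetilde{A\{a^{-1}\}}\cong C^\infty(W)/\widetilde{\mathfrak A'}$, where $\widetilde{\mathfrak A'}=\{f\in C^\infty(W)\text{ s.t. }f_p\in\mathfrak A'_p\ \forall p\in W\}$ is the germ determined ideal generated by $\mathfrak A'$ (a short check, as in the excerpt). Finally, restricting the isomorphism (\ref{SheafPullback}) to the open subset $U_a\subseteq Sp(A)$ — using that the inclusion $U_a\hookrightarrow\mathbb R^n$ factors through the open embedding $W\hookrightarrow\mathbb R^n$ and that $\mathcal O_{\mathbb R^n}|_W=\mathcal O_W$ — and writing $j:U_a\hookrightarrow W$ for the inclusion, one gets $\mathcal O_{Sp(A)}|_{U_a}\cong j^*\mathcal O_W/j^*\mathfrak a'$, where $\mathfrak a'\subseteq\mathcal O_W$ is the ideal subsheaf with $f\in\Gamma(V,\mathfrak a')\Leftrightarrow f_p\in\mathfrak A'_p\ \forall p\in V$; its stalk at $p\in W$ is $\mathfrak A'_p$, so the stalk of $\mathcal O_{Sp(A)}$ at $p\in U_a$ is $\mathcal O_{W,p}/\mathfrak A'_p$. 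It thus suffices to build an isomorphism $C^\infty(W)/\widetilde{\mathfrak A'}\xrightarrow{\sim}\Gamma(U_a,\mathcal O_{Sp(A)})$.

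\smallskip

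\noindent Consider the $C^\infty$-ring map $\pi:C^\infty(W)\to\Gamma(U_a,\mathcal O_{Sp(A)})$ given by restricting a function along $j$ and projecting modulo $j^*\mathfrak a'$. Computing its kernel is immediate: a section of the quotient sheaf vanishes iff all its germs vanish, so $\ker\pi=\{f\in C^\infty(W)\text{ s.t. }f_p\in\mathfrak A'_p\ \forall p\in U_a\}$; and for $p\in W\setminus U_a=W\setminus Z(\mathfrak A')$ some element of $\mathfrak A'$ is nonzero, hence invertible, at $p$, so $\mathfrak A'_p=\mathcal O_{W,p}$ and the condition at such $p$ is vacuous. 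Therefore $\ker\pi=\widetilde{\mathfrak A'}$, and $\pi$ induces an injection $C^\infty(W)/\widetilde{\mathfrak A'}\hookrightarrow\Gamma(U_a,\mathcal O_{Sp(A)})$. It remains to show $\pi$ is surjective.

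\smallskip

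\noindent This is where softness enters, through a partition of unity. Given $s\in\Gamma(U_a,\mathcal O_{Sp(A)})$: since the sheaf map $j^*\mathcal O_W\to\mathcal O_{Sp(A)}|_{U_a}$ is an epimorphism and any section of $j^*\mathcal O_W$ is, locally, the image of an honest smooth function on an open subset of $W$, we may choose an open cover $\{V_\alpha\}$ of $U_a$ in $W$ and functions $g_\alpha\in C^\infty(V_\alpha)$ representing $s$ over $V_\alpha\cap U_a$; then $(g_\alpha-g_\beta)_p\in\mathfrak A'_p$ for every $p\in V_\alpha\cap V_\beta\cap U_a$. Adjoin $V_0:=W\setminus U_a$ with $g_0:=0$ to obtain an open cover of all of $W$, take a smooth partition of unity $\{\rho_i\}$ subordinate to it ($W$ is paracompact), with $\mathrm{supp}\,\rho_i\subseteq V_{\alpha(i)}$ for some index $\alpha(i)$, and set $f:=\sum_i\rho_i g_{\alpha(i)}\in C^\infty(W)$. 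For $p\in U_a$, pick $\alpha\neq 0$ with $p\in V_\alpha$; near $p$ one has $f-g_\alpha=\sum_i\rho_i(g_{\alpha(i)}-g_\alpha)$ (a finite sum), and each index actually occurring has $p\in\mathrm{supp}\,\rho_i\subseteq V_{\alpha(i)}$, which forces $\alpha(i)\neq 0$ since $p\in U_a$, whence $(g_{\alpha(i)}-g_\alpha)_p\in\mathfrak A'_p$ and so $(f-g_\alpha)_p\in\mathfrak A'_p$ because $\mathfrak A'_p$ is an ideal. Thus $f$ and $g_\alpha$ have the same germ modulo $\mathfrak A'_p$ at every $p\in U_a$, i.e. $\pi(f)=s$. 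Together with the kernel computation this gives the claimed isomorphism $\widetilde{A\{a^{-1}\}}\cong C^\infty(W)/\widetilde{\mathfrak A'}\xrightarrow{\sim}\Gamma(U_a,\mathcal O_{Sp(A)})$.

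\smallskip

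\noindent The step I expect to be the main obstacle is exactly this surjectivity: the local representatives $g_\alpha$ only agree up to germs lying in $\mathfrak A'$, and only over the closed set $U_a$ and not over the ambient opens $V_\alpha$, so one must make sure the partition-of-unity gluing still produces a function representing $s$ and that the extra piece $W\setminus U_a$ causes no trouble. The remaining ingredients — smooth localization as restriction to a non-vanishing locus, the compatibility of $\widetilde{(-)}$ with presentations over arbitrary second countable finite dimensional manifolds, and the elementary facts about germ determined ideals and paracompactness — are all standard (see \cite{MR91}, \cite{Du81}).
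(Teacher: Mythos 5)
Your proof is correct and is precisely the argument the paper intends: the paper offers no proof beyond the remark that the claim is straightforward from (\ref{SheafPullback}), and your kernel computation together with the partition-of-unity surjectivity argument over the closed subset $U_a\subseteq W$ is the honest way to extract $\Gamma(U_a,\iota^*(\mathcal O_{\mathbb R^n})/\iota^*(\mathfrak a))$ from that identification. One caveat deserves a flag: your presentation $A\{a^{-1}\}\cong C^\infty(W)/\mathfrak A'$ is the localization inverting only $a$, whereas the paper's $A\{a^{-1}\}$ is obtained by inverting \emph{every} $a'\in A$ that is nonvanishing on all of $U_a$, and these two rings can genuinely differ (e.g.\ for $\mathfrak A\subset C^\infty(\mathbb R^2)$ the ideal of functions vanishing on some product neighbourhood of the $y$-axis and $a=1$: a non-negative function whose zero set pinches onto the $y$-axis lies in the paper's multiplicative set but is not invertible modulo $\mathfrak A$). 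The discrepancy is harmless for the statement being proved: the paper's $A\{a^{-1}\}$ is still a surjective image of $C^\infty(W)$, and its kernel lands in $\mathfrak A'_p$ at every $p\in U_a$ (each stalk $(\mathcal O_{Sp(A)})_p\cong\mathcal O_{W,p}/\mathfrak A'_p$ receives $A\{a^{-1}\}$), so that kernel is an ideal squeezed between $\mathfrak A'$ and $\widetilde{\mathfrak A'}$ and the germ-determined quotient is the same $C^\infty(W)/\widetilde{\mathfrak A'}$ in either case; alternatively, the extra relations are exactly the functions vanishing on a neighbourhood of $U_a$ in $W$ (argue as in Proposition \ref{InvertingModulo}), which already lie in $\widetilde{\mathfrak A'}$. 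With that one sentence added, your argument is complete.
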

In particular, if $A$ is germ determined, $\Gamma(Sp(A),\mathcal O_{Sp(A)})\cong A$. Actually, this is part of an equivalence of categories.

\begin{proposition}\label{DiscreteCase} (\cite{Du81}) Let $(X,\mathcal O_X)\in\mathbb G$, then $\Gamma(X,\mathcal O_X)\in\mathcal G$. Moreover, 
	\begin{equation}\mathbf{Sp}:\mathcal G^{op}\rightleftarrows\mathbb G:\Gamma\end{equation} 
is an equivalence of categories.\end{proposition}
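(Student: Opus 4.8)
The plan is to exploit the adjunction $\Gamma\dashv\mathbf{Sp}$ between $\mathbb{G}$ and $\mathcal{L}^{op}$ just constructed, and to check that, once $\Gamma$ is seen to take values in the full subcategory $\mathcal{G}^{op}$, both the unit and the counit are isomorphisms; an adjunction whose unit and counit are invertible is an equivalence. Half of the work is already done for us: for $A\in\mathcal{G}$ the ring-side map $A\to\Gamma(Sp(A),\mathcal{O}_{Sp(A)})$ is an isomorphism by Proposition \ref{SpecGerm} (take $a=1$), since germ determined rings satisfy $A\cong\widetilde{A}$. So the two statements that really need an argument are: (i) $\Gamma(X,\mathcal{O}_X)\in\mathcal{G}$ for every $(X,\mathcal{O}_X)\in\mathbb{G}$; and (ii) the unit $\eta_X\colon(X,\mathcal{O}_X)\to\mathbf{Sp}(\Gamma(X,\mathcal{O}_X))$ is an isomorphism of $C^\infty$-spaces.

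For (i) I would use the finite-type hypothesis together with softness of $\mathcal{O}_X$ to produce a closed embedding $\iota\colon X\hookrightarrow\mathbb{R}^n$ and a surjection of sheaves of $C^\infty$-rings $\iota^{*}\mathcal{O}_{\mathbb{R}^n}\twoheadrightarrow\mathcal{O}_X$: finitely many sections generating $\mathcal{O}_X$ locally can be spread out to global sections by softness, and, $X$ being embeddable into Euclidean space, boundedly many global sections suffice to separate points and to recover the topology. The kernel of this surjection is a module over the soft sheaf $\mathcal{O}_{\mathbb{R}^n}$, hence soft and acyclic, so taking global sections stays exact; since moreover every smooth function defined near the closed set $\iota(X)$ extends to $\mathbb{R}^n$ (multiply by a bump function), one gets a presentation $\Gamma(X,\mathcal{O}_X)\cong C^\infty(\mathbb{R}^n)/\mathfrak{A}$ with $\mathfrak{A}=\ker(C^\infty(\mathbb{R}^n)\to\Gamma(X,\mathcal{O}_X))$. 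That $\mathfrak{A}$ is germ determined is then immediate: if $f\notin\mathfrak{A}$, its image in $\Gamma(X,\mathcal{O}_X)$ is a nonzero section, so by the sheaf axiom and locality of the stalk maps $f_p\notin\mathfrak{A}_p$ at some $p\in X$.

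For (ii), set $A=\Gamma(X,\mathcal{O}_X)$ and keep the presentation above. On underlying spaces $\eta_X$ sends $p$ to the homomorphism $A\to(\mathcal{O}_X)_p\to(\mathcal{O}_X)_p/\mathfrak{m}_p=\mathbb{R}$, which, pulled back along $C^\infty(\mathbb{R}^n)\twoheadrightarrow A$, is evaluation at $\iota(p)$; I would show it is a homeomorphism onto $Sp(A)$ exactly as one does for ordinary smooth manifolds with their soft structure sheaves — injectivity and separation of points from partitions of unity supplied by softness, surjectivity from the fact that every $C^\infty$-homomorphism $\Gamma(X,\mathcal{O}_X)\to\mathbb{R}$ is evaluation at a point of $X$, and the two topologies agreeing because both are generated by the basic opens $U_a$. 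For the structure sheaves I would check that the comparison map $\mathcal{O}_{Sp(A)}\to\mathcal{O}_X$ is an isomorphism stalk by stalk: by (\ref{SheafPullback}) the stalk of $\mathcal{O}_{Sp(A)}$ at $p$ is $\mathcal{O}_{\mathbb{R}^n,\iota(p)}/\mathfrak{A}_{\iota(p)}$, the stalk of $\mathcal{O}_X$ at $p$ is $\mathcal{O}_{\mathbb{R}^n,\iota(p)}/\mathfrak{b}_p$ with $\mathfrak{b}=\ker(\iota^{*}\mathcal{O}_{\mathbb{R}^n}\to\mathcal{O}_X)$, and the comparison exhibits the former surjecting onto the latter, so everything comes down to the equality of ideals $\mathfrak{A}_{\iota(p)}=\mathfrak{b}_p$. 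The inclusion $\subseteq$ is formal, and $\supseteq$ is a one-line cutoff argument: a local representative of a germ that already dies in $(\mathcal{O}_X)_p$, multiplied by a bump function supported where it dies and equal to $1$ near $\iota(p)$, becomes a global function lying in $\mathfrak{A}$ with the same germ at $\iota(p)$.

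The step I expect to be the main obstacle is the space-level part of (ii), and within it the surjectivity $Sp(\Gamma(X,\mathcal{O}_X))\subseteq\iota(X)$: this is exactly where the finite-type hypothesis is indispensable — it is what prevents the $\mathbb{R}$-points of $\Gamma(X,\mathcal{O}_X)$ from escaping $X$ (via $X$ being paracompact and $\iota(X)$ closed in $\mathbb{R}^n$) — and where softness of $\mathcal{O}_X$ does double duty, both separating points and promoting germs to honest global functions. The finite-generation claim in (i) likewise rests on a genuine, if routine, soft-patching argument rather than a formality; everything else is either categorical bookkeeping (passing from "unit and counit invertible" to "equivalence of categories") or a direct appeal to Proposition \ref{SpecGerm}.
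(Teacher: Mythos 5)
Your overall architecture coincides with the paper's: reduce the equivalence to the unit and counit of the adjunction $\Gamma\dashv\mathbf{Sp}$, dispose of the ring-side map via Proposition \ref{SpecGerm}, and check the space-side map separately on underlying spaces and on stalks, with the stalk comparison done exactly by the bump-function/cutoff argument the paper uses. Your route to $\Gamma(X,\mathcal O_X)\in\mathcal G$ is genuinely different but sound: the paper gets it formally (the triangle identity forces the identity of $\Gamma(X,\mathcal O_X)$ to factor through $\widetilde{\Gamma(X,\mathcal O_X)}$, so the germ-determined quotient is injective), whereas you exhibit a presentation $C^\infty(\mathbb R^n)\twoheadrightarrow\Gamma(X,\mathcal O_X)$ and invoke the sheaf axiom; note that the surjection you need comes for free from the finite-type hypothesis in the definition of $\mathbb G$, so the soft-patching and acyclicity discussion is more machinery than required.

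The genuine gap is exactly where you predicted it: surjectivity of $X\to Sp(\Gamma(X,\mathcal O_X))$. You propose to deduce it from the existence of a \emph{closed} embedding $\iota\colon X\hookrightarrow\mathbb R^n$, but this is circular: $Sp(A)=Z(\mathfrak A)$ is by construction a closed subset of $\mathbb R^n$ containing $\iota(X)$, and the statement "$\iota(X)$ is closed" is equivalent (via the characteristic-function argument you yourself use) to "$\iota(X)=Z(\mathfrak A)$", which \emph{is} the surjectivity claim. Likewise "every $C^\infty$-homomorphism $\Gamma(X,\mathcal O_X)\to\mathbb R$ is evaluation at a point of $X$" is the assertion to be proved, not a fact one may quote for an arbitrary object of $\mathbb G$. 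The missing idea is the paper's sum-of-squares argument: given $p\colon\Gamma(X,\mathcal O_X)\to\mathbb R$ with no corresponding point of $X$, finite generation of $\Gamma(X,\mathcal O_X)$ makes $\mathfrak m_p=\ker p$ finitely generated, say by $\alpha_1,\dots,\alpha_n$; these have no common zero on $X$, so $\alpha_1^2+\cdots+\alpha_n^2$ is nowhere vanishing, hence invertible in every (local) stalk, hence invertible globally by gluing the stalkwise inverses --- contradicting $p(\alpha_1^2+\cdots+\alpha_n^2)=0$. With this inserted, your proof closes; without it, the crucial step rests on an unproved (and question-begging) hypothesis.
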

\begin{proof} Since objects of $\mathbb G$ are locally ringed spaces, standard technique shows that $\Gamma:\mathbb G\rightarrow\mathcal L^{op}$ is left adjoint to $\mathbf{Sp}:\mathcal L^{op}\rightarrow\mathbb G$. Let $(X,\mathcal O_X)\in\mathbb G$, using this adjunction and Proposition \ref{SpecGerm}, we see that the identity map on $\Gamma(X,\mathcal O_X)$ factors through $\widetilde{\Gamma(X,\mathcal O_X)}$, and therefore $\Gamma(X,\mathcal O_X)\in\mathcal G$.

Let $A:=\Gamma(X,\mathcal O_X)$. To prove that the adjunction $\Gamma:\mathbb G\rightleftarrows\mathcal G^{op}:Sp$  is an equivalence, we need to show that $(\iota,\iota^\sharp):(X,\mathcal O_X)\rightarrow(Sp(A),\mathcal O_{Sp(A)})$, corresponding to the identity on $(X,\mathcal O_X)$, is an isomorphism.  Suppose there was $p:\Gamma(X,\mathcal O_X)\rightarrow\mathbb R$, that did not correspond to a point in $X$. Let $\mathfrak m_p$ be the kernel of $p$. Since $\Gamma(X,\mathcal O_X)$ is finitely generated, all of its maximal ideals, having $\mathbb R$ as the residue field, are finitely generated, and hence there are $\alpha_1,\ldots,\alpha_n\in\mathfrak m_p$ generating $\mathfrak m_p$. By assumption there is no point in $X$, where all $\alpha_i$'s vanish, therefore $\alpha_1^2+\ldots+\alpha_n^2\neq 0$ on $X$, and hence it is invertible, contradiction to existence of $p$. This means that $\iota$ is surjective. Since $X$ is Hausdorff and $\mathcal O_X$ is soft, it is standard to prove, that $\iota$ is also injective and its inverse is continuous. 

Now we identify $X=Sp(A)$. Since $\mathcal O_X$ is soft, $\forall p\in X$, $A=\Gamma(X,\mathcal O_X)\rightarrow(\mathcal O_X)_p$ is surjective, and hence $(\mathcal O_{Sp(A)})_p\rightarrow(\mathcal O_X)_p$ is surjective as well. Let $a\in A$, s.t. $a_p\mapsto 0_p\in(\mathcal O_X)_p$, then there is an open $p\in U\subseteq X$, s.t. $\forall q\in U$ $a_q\mapsto 0_q\in(\mathcal O_X)_p$. Let $b\in A$, s.t. $b_p=1_p\in(\mathcal O_{Sp(A)})_p$, $b_q=0_q\in(\mathcal O_{Sp(A)})_q$ $\forall q\in X-U$. Then, clearly, $(a b)_q\mapsto 0_q\in(\mathcal O_X)_q$ $\forall q\in X$, i.e. $a b\mapsto 0\in\Gamma(X,\mathcal O_X)$. Therefore $a b=0\in A$, and hence $a_p=0\in(\mathcal O_{Sp(A)})_p$, i.e. $\mathcal O_{Sp(A)}\rightarrow\mathcal O_X$ is an isomorphism.\end{proof}

\smallskip

From Proposition \ref{DiscreteCase} it is clear, that for a finitely generated $C^\infty$-ring, being germ determined is equivalent to being {\it geometric}, i.e. being the ring of smooth functions on a $C^\infty$-space. The requirement to be finitely generated is essential, since even the free $C^\infty$-ring $C^\infty(\mathbb R^S)$, with $S$ infinite, is not isomorphic to $\Gamma(Sp(C^\infty(\mathbb R^S)),\mathcal O_{Sp(C^\infty(\mathbb R^S))})$.

Yet, one has the notion of a germ determined $C^\infty$-ring also in the infinitely generated case (\cite{Bo11}). In general, for $A$ to be germ determined is equivalent to $A\rightarrow\Gamma(Sp(A),\mathcal O_{Sp(A)})$ being injective. 

Surjectivity is more complicated. If $Sp(A)$ is paracompact, it is enough for $A$ to be \underline{locally complete},\footnote{In \cite{Jo11} this is called being {\it complete with respect to locally finite sums}.} i.e. if a family $\{a_i\}\subseteq A$ is s.t. $\forall p\in Sp(A)$ only finitely many of $\{(a_i)_q\}$ are not $0$ in a neighborhood of $p$, then $\exists a\in A$, s.t. $\forall p\in Sp(A)$ $a_p=\Sigma(a_i)_p$. Any finitely generated, germ determined $C^\infty$-ring is complete in this sense. This is not always true for the infinitely generated ones.

\smallskip

These notions can be extended to modules over $C^\infty$-rings.\footnote{Modules over $C^\infty$-rings are just modules over the underlying commutative $\mathbb R$-algebras.} An $A$-module $M$ is \underline{germ determined and locally complete} if for any family $\{t_i\}\subseteq M$, s.t $\forall p\in Sp(A)$ almost all of $\{(t_i)_q\}$ are $0$ in a neighborhood of $p$, there is a unique $t\in M$, s.t. $\forall p\text{ }\Sigma(t_i)_p=t_p$.
We will denote the category of such $A$-modules by $\overline{Mod}_A$. Since sheaves of modules over soft sheaves of rings are themselves soft, we have the following conclusion.
\begin{proposition} Let $A$ be a finitely generated $C^\infty$-ring. The functor of global sections defines an equivalence between the category of sheaves of $\mathcal O_{Sp(A)}$-modules and $\overline{Mod}_A$.\end{proposition}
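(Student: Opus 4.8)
The plan is to construct an explicit quasi-inverse to $\Gamma$ and to check that the unit and counit of the resulting adjunction are isomorphisms, reusing almost verbatim the softness and bump-function arguments from the proof of Proposition \ref{DiscreteCase}. First I would observe that $A\to\widetilde A$ is surjective and that the "germ determined" clause forces $\widetilde{\mathfrak A}$ to act by zero on any $M\in\overline{Mod}_A$, so that $\overline{Mod}_A=\overline{Mod}_{\widetilde A}$; hence we may assume $A$ itself germ determined, whence $\Gamma(Sp(A),\mathcal O_{Sp(A)})\cong A$ by Proposition \ref{DiscreteCase}. For $M\in\overline{Mod}_A$ let $\widetilde M$ be the sheaf of $\mathcal O_{Sp(A)}$-modules associated to the presheaf $U_a\mapsto M\{a^{-1}\}$, the localization of $M$ along the smooth localizations of $A$; using (\ref{SheafPullback}) and Proposition \ref{SpecGerm} one checks, exactly as for $\mathcal O_{Sp(A)}$ itself, that $(\widetilde M)_p\cong M\otimes_A(\mathcal O_{Sp(A)})_p$, so $\widetilde M$ is soft, being a sheaf of modules over the soft sheaf $\mathcal O_{Sp(A)}$. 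The universal property of sheafification, plus the observation that over a basic open $U_a$ an $\mathcal O_{Sp(A)}$-linear map out of $M\{a^{-1}\}$ is the same datum as an $A$-linear map out of $M$, shows that $\widetilde{(-)}$ is left adjoint to $\Gamma(Sp(A),-)$. It then remains to verify that $\Gamma$ lands in $\overline{Mod}_A$ and that the unit and counit are isomorphisms.

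That $\Gamma(Sp(A),\mathcal M)$ is germ determined and locally complete as an $A$-module, for any sheaf $\mathcal M$ of $\mathcal O_{Sp(A)}$-modules, is immediate from the sheaf axioms and the paracompactness of $Sp(A)$ (a subspace of $\mathbb R^n$, hence metrizable): a section with vanishing germs everywhere is $0$, and a family of sections with locally finite germ supports can be summed locally and glued. For the counit $\widetilde{\Gamma(Sp(A),\mathcal M)}\to\mathcal M$ I would argue on stalks at a point $p$. Surjectivity is softness of $\mathcal M$: a germ is represented by a section over some neighborhood $V$ of $p$, and multiplying it by an element of $\mathcal O_{Sp(A)}$ equal to $1$ near $p$ and supported in $V$ — such cutoffs exist because $Sp(A)\subseteq\mathbb R^n$ and every open subset of $\mathbb R^n$ has a characteristic function (\cite{MR91}, I.1.4) — produces a global section with the same germ. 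Injectivity is the cutoff trick of Proposition \ref{DiscreteCase}: if $s\in\Gamma(Sp(A),\mathcal M)$ vanishes on a neighborhood $V$ of $p$, pick $a\in A$ with $a(p)=1$ and $\mathrm{supp}(a)\subseteq V$; then $as=0$ globally, so $s$ already dies in $\Gamma(Sp(A),\mathcal M)\{a^{-1}\}$, i.e. in the stalk of $\widetilde{\Gamma(Sp(A),\mathcal M)}$ at $p$.

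For the unit $M\to\Gamma(Sp(A),\widetilde M)$: injectivity is exactly the "germ determined" half of the hypothesis on $M$ (apply the uniqueness in the defining property of $\overline{Mod}_A$ to a one-element family), while surjectivity is the "locally complete" half — by softness a global section of $\widetilde M$ is locally the germ of an element of $M$, and a locally finite partition of unity subordinate to a cover by such neighborhoods produces a locally finite family $\{t_i\}\subseteq M$ whose sum $t$, which exists and is unique by the definition of $\overline{Mod}_A$, maps to the given section. (Alternatively, once $\Gamma$ is known faithful — again immediate from softness — the unit is an isomorphism by the triangle identities and the counit being one.) The step I expect to be the main obstacle is precisely this last one: matching the partition-of-unity gluing against the "$\exists!\,t$ with $\sum(t_i)_p=t_p$" clause, that is, seeing that softness supplies enough cutoffs to localize and reassemble, and that germ-determinedness together with local completeness is exactly what turns that reassembly into a genuine two-sided inverse — so that, unlike in algebraic geometry, no quasi-coherence hypothesis on the sheaf side is needed.
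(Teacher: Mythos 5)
Your argument is correct and is exactly the softness-plus-cutoff argument the paper has in mind: the paper offers no written proof beyond the remark that sheaves of modules over soft sheaves of rings are themselves soft, and your construction of the quasi-inverse $M\mapsto\widetilde M$ together with the stalkwise verification of the unit and counit is the standard way to fill that in. (Only your parenthetical shortcut is suspect --- a counit isomorphism plus faithfulness of $\Gamma$ does not by itself make the unit an isomorphism on all of $\overline{Mod}_A$, since that also requires essential surjectivity of $\Gamma$ --- but your direct partition-of-unity verification renders it unnecessary.)
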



\section{Simplicial $C^\infty$-rings and simplicial $C^\infty$-spaces}\label{SecondSection}

\underline{A simplicial $C^\infty$-ring} is a simplicial diagram 
	\begin{equation}A_\bullet:=(A_k,\{\partial_{k+1,i}:A_{k+1}\rightarrow A_k\}_{i=0}^{k+1},\{\delta_{k,j}:A_k\rightarrow 
	A_{k+1}\}_{j=0}^k)_{k\geq 0}\end{equation}
in the category of $C^\infty$-rings. We don't require any $A_k$ to be finitely generated. Morphisms are the usual natural transformations between simplicial diagrams. We will denote the category of simplicial $C^\infty$-rings by $s\mathcal C^\infty\mathcal R$.

From \cite{Qu67} we know that $s\mathcal C^\infty\mathcal R$ is a simplicial closed model category, where \underline{weak equivalences} and \underline{fibrations} are those morphisms that define respectively weak equivalences and fibrations between the underlying simplicial sets. Since each simplicial $C^\infty$-ring is a simplicial $\mathbb R$-module, it is enough to calculate homotopy groups at one chosen point (e.g. $0$), and, moreover, 
	\begin{equation}\pi_k(A_\bullet,0)\cong H_k(\mathcal M(A_\bullet)),\end{equation} 
where $\mathcal M(A_\bullet)=(\bigoplus A_k,\Sigma(-1)^i\partial_{k+1,i})$ is the \underline{Moore complex} of $A_\bullet$. We will write $\pi_k(A_\bullet)$ instead of $\pi_k(A_\bullet,0)$.

Boundaries $\partial_{1,0},\partial_{1,1}:A_1\rightarrow A_0$ define a $\mathcal C^\infty$-congruence on $A_0$, and $\pi_0(A_\bullet)$ is the quotient of $A_0$ by this congruence. Therefore $\pi_0(A_\bullet)$ is a $C^\infty$-ring. Moreover, since each $A_k$ is in particular a commutative $\mathbb R$-algebra, $\forall k\geq 1$ $\pi_k(A_\bullet)$ is a module over $\pi_0(A_\bullet)$.  

\smallskip

Let $X$ be a topological space, and let $\mathcal O_{\bullet,X}$ be a sheaf of $C^\infty$-rings on $X$. We will denote by $\pi_k(\mathcal O_{\bullet,X})$, $k\geq 0$ the sheaves associated to presheaves  
	\begin{equation}\xymatrix{U\ar@{|->}[r] & \pi_k(\Gamma(U,\mathcal O_{\bullet,X})),\quad k\geq 0.}\end{equation}
Clearly, $\pi_0(\mathcal O_{\bullet,X})$ is a sheaf of $C^\infty$-rings, and $\{\pi_k(\mathcal O_{\bullet,X})\}_{k\geq 1}$ are sheaves of $\pi_0(\mathcal O_{\bullet,X})$-modules.

\underline{A simplicial $C^\infty$-space} is a pair $(X,\mathcal O_{\bullet,X})$, where $X$ is a topological space, and $\mathcal O_{\bullet,X}:=\{\mathcal O_{k,X}\}_{k\geq 0}$ is a sheaf of simplicial $C^\infty$-rings on $X$, s.t. $(X,\pi_0(\mathcal O_{\bullet,X}))$ is a $C^\infty$-space (Section \ref{FirstSection}), i.e. $X$ is Hausdorff, and $\pi_0(\mathcal O_{\bullet,X})$ is a soft sheaf of finitely generated $C^\infty$-rings, whose stalks are local $C^\infty$-rings.

A morphism $(X,\mathcal O_{\bullet,X})\rightarrow(Y,\mathcal O_{\bullet,Y})$ is given by a pair $(\phi,\phi^\sharp_\bullet)$, where $\phi:X\rightarrow Y$ is a continuous map, and $\phi^\sharp_\bullet:=\{\phi^\sharp_k:\mathcal O_{k,Y}\rightarrow\phi_*(\mathcal O_{k,X})\}_{k\geq 0}$ is a morphism of sheaves of simplicial $C^\infty$-rings. We will denote the category of simplicial $C^\infty$-spaces by $s\mathbb G$.

\subsection{Local weak equivalences}

Given $(X,\mathcal O_{\bullet,X})\in s\mathbb G$, we have a $C^\infty$-space $(X,\pi_0(\mathcal O_{\bullet,X}))$ and a sequence $\{\pi_k(\mathcal O_{\bullet,X})\}_{k\geq 1}$ of sheaves of modules on it. We will denote this $C^\infty$-space, together with the sheaves of modules, by $\pi_\bullet(X,\mathcal O_{\bullet,X})$. Every morphism $(\phi,\phi^\sharp_\bullet):(X,\mathcal O_{\bullet,X})\rightarrow(Y,\mathcal O_{\bullet,Y})$ in $s\mathbb G$ induces a morphism 
	\begin{equation}\xymatrix{\pi_\bullet(\phi,\phi^\sharp_\bullet):\pi_\bullet(X,\mathcal O_{\bullet,X})\ar[r] & \pi_\bullet(Y,\mathcal O_{\bullet,Y}).}\end{equation}  
We call $(\phi,\phi^\sharp_\bullet)$ a \underline{local weak equivalence}, if $\pi_\bullet(\phi,\phi^\sharp_\bullet)$ is an isomorphism.

\smallskip

Let $s\mathcal L\subset s\mathcal C^\infty\mathcal R$ be the full subcategory, consisting of  $A_\bullet$, s.t. $\pi_0(A_\bullet)$ is a finitely generated $C^\infty$-ring.\footnote{Note that we do not require any $A_k$ to be finitely generated, only $\pi_0(A_\bullet)$.} There is an obvious functor 
	\begin{equation}\Gamma:s\mathbb G\rightarrow s\mathcal L^{op},\quad (X,\mathcal O_{\bullet,X})\mapsto\Gamma(X,\mathcal O_{\bullet,X}).\end{equation} 
In general $\Gamma$ does not map local weak equivalences in $s\mathbb G$ to weak equivalences in $s\mathcal L^{op}$. It does so for a particular kind of simplicial $C^\infty$-spaces, that we are going to consider next.

\smallskip

\underline{A localized simplicial $C^\infty$-space} is a $C^\infty$-space $(X,\mathcal O_{\bullet,X})$, s.t. stalks of $\mathcal O_{0,X}$ are local $C^\infty$-rings. We will denote by $\overline{s\mathbb G}\subset s\mathbb G$ the full subcategory, consisting of localized simplicial $C^\infty$-spaces. The following proposition explains why $\Gamma$, restricted to $\overline{s\mathbb G}$, maps local weak equivalences to weak equivalences.

\begin{proposition} Let $(X,\mathcal O_{\bullet,X})\in\overline{s\mathbb G}$, then $\mathcal O_{0,X}$ is a soft sheaf.\end{proposition}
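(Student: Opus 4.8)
The plan is to prove that $\mathcal O_{0,X}$ admits partitions of unity subordinate to suitable open covers, and then to deduce softness by the usual cut-off argument. First I record the geometry. Since $(X,\pi_0(\mathcal O_{\bullet,X}))$ is a $C^\infty$-space, Proposition~\ref{DiscreteCase} identifies $X$ with $Sp(A)$ for $A:=\Gamma(X,\pi_0(\mathcal O_{\bullet,X}))$, and a presentation $A\cong C^\infty(\mathbb R^n)/\mathfrak A$ realizes $X$, via an embedding $\iota$, as the closed subset $\{p:p(\mathfrak A)=0\}$ of $\mathbb R^n$; in particular $X$ is metrizable, hence paracompact. It therefore suffices to prove that for each closed $Z\subseteq X$, each open $U\supseteq Z$, and each $s\in\Gamma(U,\mathcal O_{0,X})$, the section $s$ agrees on a neighbourhood of $Z$ with a global section of $\mathcal O_{0,X}$.

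Next, local coordinates. For every $p\in X$ the stalk map $(\mathcal O_{0,X})_p\to\pi_0(\mathcal O_{\bullet,X})_p$ is surjective, being the canonical projection $B_0\to\pi_0(B_\bullet)$ of the simplicial $C^\infty$-ring $B_\bullet:=(\mathcal O_{\bullet,X})_p$. So the germs of $x_1,\dots,x_n$ lift to $(\mathcal O_{0,X})_p$, and by choosing representatives on a small open $W\ni p$ and shrinking I get a $C^\infty$-ring morphism $\Phi_W\colon C^\infty(\mathbb R^n)\to\Gamma(W,\mathcal O_{0,X})$ lifting the canonical map $C^\infty(\mathbb R^n)\to\Gamma(W,\pi_0(\mathcal O_{\bullet,X}))$. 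Such ``charts'' $W$ form a basis of $X$. Now fix $Z,U,s$; choose a locally finite cover $\{V_\lambda\}$ of $X$ by charts refining $\{U,X\setminus Z\}$, write $V_\lambda=V_\lambda'\cap X$ with $V_\lambda'$ open in $\mathbb R^n$, and pick a smooth partition of unity $\{\bar\rho_\lambda\}\cup\{\bar\rho_0\}$ on $\mathbb R^n$ subordinate to $\{V_\lambda'\}\cup\{\mathbb R^n\setminus X\}$. Put $\rho_\lambda:=\Phi_{V_\lambda}(\bar\rho_\lambda)\in\Gamma(V_\lambda,\mathcal O_{0,X})$. Here the hypothesis that the stalks of $\mathcal O_{0,X}$ are local enters for the first time: the residue homomorphism of $\Phi_{V_\lambda}$ at a point $q$ is evaluation at $q$, so if $f\in C^\infty(\mathbb R^n)$ vanishes near $q$ then $\Phi_{V_\lambda}(f)$ vanishes near $q$ as well (write $f=fe$ with $e$ smooth, $\equiv1$ on $\mathrm{supp}\,f$ and $\equiv0$ near $q$, and note that $1-\Phi_{V_\lambda}(e)$ is a unit near $q$). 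Hence $\rho_\lambda$ vanishes outside $\mathrm{supp}\,\bar\rho_\lambda$, so $\mathrm{supp}\,\rho_\lambda\subseteq V_\lambda$, and $\rho_\lambda$ extends by zero to a global section, the family $\{\mathrm{supp}\,\rho_\lambda\}$ being locally finite.

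The subtle point --- and the only step that is not routine bookkeeping --- is that $\rho:=\sum_\lambda\rho_\lambda\in\Gamma(X,\mathcal O_{0,X})$ need not equal $1$, since the charts $\Phi_{V_\lambda}$ are chosen independently and do not glue. What is true is that, writing $\iota^\sharp\colon\iota^*\mathcal O_{\mathbb R^n}\to\pi_0(\mathcal O_{\bullet,X})$ for the surjection of (\ref{SheafPullback}), the image of $\rho_\lambda$ in $\pi_0(\mathcal O_{\bullet,X})$ is $\iota^\sharp(\bar\rho_\lambda|_X)$; since $\iota^\sharp$ is a morphism of sheaves it commutes with the locally finite sum $\sum_\lambda\bar\rho_\lambda|_X=1$, so $\rho$ maps to $1$, i.e.\ $1-\rho$ lies in the kernel ideal $\mathcal K:=\ker(\mathcal O_{0,X}\to\pi_0(\mathcal O_{\bullet,X}))$. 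This is precisely where $(X,\mathcal O_{\bullet,X})\in\overline{s\mathbb G}$ is essential: both $(\mathcal O_{0,X})_p$ and $\pi_0(\mathcal O_{\bullet,X})_p$ are local $C^\infty$-rings with residue field $\mathbb R$, so the surjection between them is a local homomorphism and $\mathcal K_p\subseteq\mathfrak m_p$ for every $p$. Therefore $\rho=1-(1-\rho)$ is a unit at each stalk, so its stalkwise inverses patch to a global $\rho^{-1}\in\Gamma(X,\mathcal O_{0,X})$, and $\tilde\rho_\lambda:=\rho^{-1}\rho_\lambda$ --- which has the same support as $\rho_\lambda$ --- defines a genuine partition of unity $\sum_\lambda\tilde\rho_\lambda=1$ subordinate to $\{V_\lambda\}$.

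Finally, with $J:=\{\lambda:V_\lambda\subseteq U\}$, so that $V_\lambda\cap Z=\emptyset$ whenever $\lambda\notin J$, the element $t:=\sum_{\lambda\in J}\tilde\rho_\lambda\,s$ (each summand extended by zero) is a well-defined global section of $\mathcal O_{0,X}$, and near any point of $Z$ only indices in $J$ contribute, whence $t=(\sum_\lambda\tilde\rho_\lambda)\,s=s$ there. This verifies the softness criterion. I expect the real obstacle to be exactly the defect-correction above: recognising that the pulled-back family is a partition of unity only modulo $\mathcal K$, and that locality of the stalks of $\mathcal O_{0,X}$ --- not any further softness or completeness assumption --- is what makes the defect invertible. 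Everything else is standard: support bookkeeping, smooth partitions of unity on $\mathbb R^n$, and gluing of locally finite sums of sections.
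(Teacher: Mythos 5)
Your proof is correct, but it takes a genuinely different route from the paper's. The paper localizes the problem: it invokes Godement's criterion (a locally soft sheaf on a paracompact space is soft), and near each point produces a closed neighbourhood $U$ together with a finitely generated $C^\infty$-subring $A\subseteq\Gamma(U,\mathcal O_{0,X})$ surjecting onto $\Gamma(U,\pi_0(\mathcal O_{\bullet,X}))$; it then shows $U\cong Sp(A)$ as topological spaces and transports separating functions from $\mathcal O_{Sp(A)}$ to $\mathcal O_{0,X}$. You instead work globally: you embed $X$ as a closed subset of $\mathbb R^n$ via Proposition \ref{DiscreteCase}, pull back a smooth partition of unity through locally defined lifts $C^\infty(\mathbb R^n)\rightarrow\Gamma(V_\lambda,\mathcal O_{0,X})$, and correct the defect $1-\rho$ --- which lies in the kernel of $\mathcal O_{0,X}\rightarrow\pi_0(\mathcal O_{\bullet,X})$ and hence stalkwise in the maximal ideal --- by inverting $\rho$. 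Both arguments rest on the same two pillars, namely surjectivity of $(\mathcal O_{0,X})_p\rightarrow(\pi_0(\mathcal O_{\bullet,X}))_p$ to lift generators/coordinates locally, and locality of the stalks of $\mathcal O_{0,X}$ to promote non-vanishing in $\pi_0$ to invertibility upstairs; your identification of the defect-correction step as the one non-routine point is accurate, and it is handled correctly. What your version buys is a stronger output: genuine global partitions of unity in $\mathcal O_{0,X}$ subordinate to an arbitrary cover by charts, which immediately gives softness of every sheaf of $\mathcal O_{0,X}$-modules and would streamline the locally-finite-sum arguments appearing later in this section. The cost is extra bookkeeping (closedness of $X$ in $\mathbb R^n$, support control for the pulled-back bump functions, and the shrinking needed so that the local coordinate lifts agree with the canonical map on sections rather than merely on germs at the chosen point), all of which you carry out; the paper's version is shorter because Godement's theorem absorbs the paracompactness bookkeeping.
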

\begin{proof} Since $(X,\pi_0(\mathcal O_{\bullet,X}))$ is a $C^\infty$-space, $X$ is paracompact. Therefore (\cite{Go60}, th. II.3.7.2), to prove that $\mathcal O_{0,X}$ is soft, it is enough to show that it is locally soft, i.e. $\forall p\in X$ there is a neighborhood $U\ni p$, s.t. $\forall V_1,V_2\subseteq U$ closed, with $V_1\cap V_2=\emptyset$, there is $\alpha\in\Gamma(U,\mathcal O_{0,X})$, s.t. $\alpha_{V_1}=0$, $\alpha_{V_2}=1$.

Let $p\in X$, and let $\{f_i\}_{1\leq i\leq n}$ be a set of generators of $\Gamma(X,\pi_0(\mathcal O_{\bullet,X}))$ as a $C^\infty$-ring. Since $(\mathcal O_{0,X})_p\rightarrow(\pi_0(\mathcal O_{\bullet,X}))_p$ is surjective, there is a neighborhood $W\ni p$, s.t. $\{f_i|_W\}_{i=1}^n$ are in the image of $\Gamma(W,\mathcal O_{0,X})\rightarrow\Gamma(W,\pi_0(\mathcal O_{\bullet,X}))$. Choose a closed neighborhood $U\ni p$, s.t. $U\subseteq W$, we claim that $U$ satisfies the conditions of theorem II.3.7.2 in \cite{Go60}.

Indeed, since $\pi_0(\mathcal O_{\bullet,X})$ is soft, $\Gamma(X,\pi_0(\mathcal O_{\bullet,X}))\rightarrow\Gamma(U,\pi_0(\mathcal O_{\bullet,X}))$ is surjective, and hence $\Gamma(U,\pi_0(\mathcal O_{\bullet,X}))$ is generated by the images of $\{f_i\}_{i=1}^n$. Therefore $\Gamma(U,\mathcal O_{0,X})\rightarrow\Gamma(U,\pi_0(\mathcal O_{\bullet,X}))$ is surjective. We can choose a finitely generated $C^\infty$-subring $A\subseteq\Gamma(U,\mathcal O_{0,X})$, s.t. $A\rightarrow\Gamma(U,\pi_0(\mathcal O_{\bullet,X}))$ is surjective. We claim that $U\cong Sp(A)$ as topological spaces.

Suppose not, surjectivity of $A\rightarrow\Gamma(U,\pi_0(\mathcal O_{\bullet,X}))$ implies that $U\subseteq Sp(A)$ as a closed subspace, hence $Sp(A)-U$ is non-empty and open. Therefore, $\exists a\in A$, s.t. $a\neq 0$, $a_U=0$, when considered as a section of $\mathcal O_{Sp(A)}$. Since $A$ is finitely generated, $\exists b\in A$, s.t. $a b=0$ and $b(q)=1$ $\forall q\in U$. Since $(X,\mathcal O_{\bullet,X})$ is localized, $b$ is invertible in $\Gamma(U,\mathcal O_{0,X})$, and therefore $a=0$ as a section of $\mathcal O_{0,X}$, contradiction to $A$ being a $C^\infty$-subring. So $U\cong Sp(A)$.

From $U\cong Sp(A)$ and $A$ being finitely generated, we conclude that $\forall V_1,V_2\subseteq U$ closed, s.t. $V_1\cap V_2=\emptyset$, there is $\alpha\in A$, s.t. $\alpha_{V_1}=0$, $\alpha_{V_2}=1$, where we consider $\alpha$ as a section of $\mathcal O_{Sp(A)}$. Since $(X,\mathcal O_{\bullet,X})$ is localized, $\alpha$ has the same properties, when considered as a section of $\mathcal O_{0,X}$. 
\end{proof}

\smallskip

Let $(X,\mathcal O_{\bullet,X})\in\overline{s\mathbb G}$. Since $\mathcal O_{0,X}$ is soft, all sheaves of boundaries in $\mathcal M(\mathcal O_{\bullet,X})$ are soft as well, and hence (see e.g. \cite{Go60}, theorem II.3.5.2) 
	\begin{equation}\label{GammaCommutes}\forall k\geq 0\quad\Gamma(X,\pi_k(\mathcal O_{\bullet,X}))\cong\pi_k(\Gamma(X,\mathcal O_{\bullet,X})).\end{equation}
This immediately implies the following proposition.

\begin{proposition}\label{GlobalSections} The functor of global sections $\Gamma:\overline{s\mathbb G}\rightarrow s\mathcal L^{op}$ maps local weak equivalences to weak equivalences.\end{proposition}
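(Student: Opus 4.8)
The plan is to obtain the statement directly from the isomorphism (\ref{GammaCommutes}) --- available for every object of $\overline{s\mathbb G}$ by the previous proposition --- together with the observation that (\ref{GammaCommutes}) is natural in the simplicial $C^\infty$-space. In one line: a local weak equivalence induces isomorphisms of the homotopy sheaves, the functor of global sections carries an isomorphism of sheaves to an isomorphism, and (\ref{GammaCommutes}) identifies the latter with the map induced on the homotopy groups of the rings of global sections.

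First I would unwind the hypothesis. Let $(\phi,\phi^\sharp_\bullet):(X,\mathcal O_{\bullet,X})\rightarrow(Y,\mathcal O_{\bullet,Y})$ be a local weak equivalence between objects of $\overline{s\mathbb G}$, so that $\pi_\bullet(\phi,\phi^\sharp_\bullet)$ is an isomorphism. Being an isomorphism in the category of $C^\infty$-spaces equipped with sequences of sheaves of modules, its underlying continuous map $\phi$ is in particular a homeomorphism; identifying $X=Y$ along $\phi$, we conclude that for every $k\geq 0$ the induced map of sheaves $\pi_k(\mathcal O_{\bullet,Y})\rightarrow\phi_*(\pi_k(\mathcal O_{\bullet,X}))$ on $Y$ is an isomorphism. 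Applying $\Gamma(Y,-)$ gives isomorphisms $\Gamma(Y,\pi_k(\mathcal O_{\bullet,Y}))\cong\Gamma(X,\pi_k(\mathcal O_{\bullet,X}))$ for all $k\geq 0$.

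Next I would invoke (\ref{GammaCommutes}) at both ends and compare. Since $(X,\mathcal O_{\bullet,X})$ and $(Y,\mathcal O_{\bullet,Y})$ lie in $\overline{s\mathbb G}$, the previous proposition makes $\mathcal O_{0,X}$ and $\mathcal O_{0,Y}$ soft, whence (\ref{GammaCommutes}) holds for both: $\pi_k(\Gamma(X,\mathcal O_{\bullet,X}))\cong\Gamma(X,\pi_k(\mathcal O_{\bullet,X}))$ and likewise over $Y$. These identifications are natural, since each step behind (\ref{GammaCommutes}) --- applying $\Gamma$ termwise to the Moore complex, and the exactness of $\Gamma$ on the short exact sequences of soft sheaves cutting out cycles, boundaries and homotopy subquotients over a paracompact base --- is functorial in the simplicial $C^\infty$-space. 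Therefore the square relating $\pi_k\bigl(\Gamma(\phi,\phi^\sharp_\bullet)\bigr):\pi_k(\Gamma(Y,\mathcal O_{\bullet,Y}))\rightarrow\pi_k(\Gamma(X,\mathcal O_{\bullet,X}))$ to the isomorphism $\Gamma(Y,\pi_k(\mathcal O_{\bullet,Y}))\cong\Gamma(X,\pi_k(\mathcal O_{\bullet,X}))$ of the previous paragraph commutes, so $\Gamma(\phi,\phi^\sharp_\bullet)$ is an isomorphism on all homotopy groups. As weak equivalences are detected on underlying simplicial sets, this says $\Gamma(\phi,\phi^\sharp_\bullet)$ is a weak equivalence in $s\mathcal C^\infty\mathcal R$, hence in $s\mathcal L^{op}$ (note $\Gamma$ does land in $s\mathcal L$, since $\pi_0(\Gamma(X,\mathcal O_{\bullet,X}))\cong\Gamma(X,\pi_0(\mathcal O_{\bullet,X}))$ is finitely generated).

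The argument is formal and I do not anticipate a serious obstacle: $\overline{s\mathbb G}$ is designed exactly so that $\mathcal O_{0,X}$, and hence every subquotient sheaf of the Moore complex, is soft, which is what makes $\Gamma$ homotopically exact. The only point that merits an explicit word is the naturality of (\ref{GammaCommutes}); I would get it from the naturality of the long exact homology sequences attached to the short exact sequences of soft sheaves $0\rightarrow Z_k\rightarrow\mathcal O_{k,X}\rightarrow B_{k-1}\rightarrow 0$ and $0\rightarrow B_k\rightarrow Z_k\rightarrow\pi_k(\mathcal O_{\bullet,X})\rightarrow 0$ (with $Z_k$, $B_k$ the cycle and boundary sheaves of $\mathcal M(\mathcal O_{\bullet,X})$), to all of which $\Gamma$ applies exactly.
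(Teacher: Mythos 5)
Your argument is correct and is exactly the paper's: Proposition \ref{GlobalSections} is stated immediately after the isomorphism (\ref{GammaCommutes}), which the paper derives from softness of $\mathcal O_{0,X}$ and asserts ``immediately implies'' the proposition. You have simply filled in the routine details (naturality of (\ref{GammaCommutes}) via the soft cycle/boundary sheaves, and the fact that $\phi$ is a homeomorphism), which is the intended reading.
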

It might appear that localized simplicial $C^\infty$-spaces are very special. However, the following proposition shows that every simplicial $C^\infty$-space can be \underline{localized}, s.t. the result is locally weak equivalent to the original space.

\begin{proposition}\label{LocalizationFunctor} The inclusion $\overline{s\mathbb G}\subset s\mathbb G$ has a right adjoint. Unit of this adjunction is an isomorphism, and counit consists of local weak equivalences.\end{proposition}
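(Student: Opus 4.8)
The plan is to keep the underlying topological space fixed and to localize the structure sheaf stalkwise. Given $(X,\mathcal O_{\bullet,X})\in s\mathbb G$ and $p\in X$, observe first that $(\mathcal O_{0,X})_p\to(\pi_0(\mathcal O_{\bullet,X}))_p$ is surjective: taking stalks and taking $\pi_0$ both commute with filtered colimits, and $\pi_0$ of a simplicial $C^\infty$-ring is a quotient of its degree-zero part. Since $(X,\pi_0(\mathcal O_{\bullet,X}))$ is a $C^\infty$-space, $(\pi_0(\mathcal O_{\bullet,X}))_p$ is a local $C^\infty$-ring; let $S_p\subseteq(\mathcal O_{0,X})_p$ be the preimage of its group of units, which is the complement of a maximal ideal with residue field $\mathbb R$. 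I would then set $\mathcal O^{\mathrm{loc}}_{\bullet,X}$ to be the level-wise sheafification of $U\mapsto\mathcal S(U)^{-1}\Gamma(U,\mathcal O_{\bullet,X})$, where $\mathcal S(U):=\{a\in\Gamma(U,\mathcal O_{0,X})\mid a_p\in S_p\ \forall p\in U\}$ acts on $\Gamma(U,\mathcal O_{k,X})$ through the iterated degeneracy; localizing a simplicial $C^\infty$-ring at a multiplicative subset of its degree-zero part is performed degree by degree and is visibly compatible with all faces and degeneracies, so this is a sheaf of simplicial $C^\infty$-rings with stalk $S_p^{-1}(\mathcal O_{\bullet,X})_p$ at $p$. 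The localization maps assemble into a natural morphism $c:(X,\mathcal O^{\mathrm{loc}}_{\bullet,X})\to(X,\mathcal O_{\bullet,X})$ in $s\mathbb G$, and it suffices to prove that $(X,\mathcal O^{\mathrm{loc}}_{\bullet,X})\in\overline{s\mathbb G}$ and that $c$ is a coreflection; this produces the right adjoint $R$, with $c$ its counit, automatically.

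Since every element of $S_p$ already maps to a unit of $(\pi_0(\mathcal O_{\bullet,X}))_p$, and localization of simplicial modules is exact, $\pi_k(\mathcal O^{\mathrm{loc}}_{\bullet,X})\cong\pi_k(\mathcal O_{\bullet,X})$ for all $k$; in particular $(X,\pi_0(\mathcal O^{\mathrm{loc}}_{\bullet,X}))$ is the same $C^\infty$-space, so $c$ is a local weak equivalence, and what remains for membership in $\overline{s\mathbb G}$ is that the stalks $(\mathcal O^{\mathrm{loc}}_{0,X})_p=S_p^{-1}(\mathcal O_{0,X})_p$ are local $C^\infty$-rings. This is the one genuinely $C^\infty$-specific input: the smooth localization of a $C^\infty$-ring at the complement of the maximal ideal determined by a point has a single point in its spectrum and is a local $C^\infty$-ring, for which I would invoke the standard theory of smooth localization (\cite{MR91}, \cite{Du81}). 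This verification, together with making the level-wise localization and sheafification precise, is where essentially all of the technical work sits.

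For the coreflection property, let $(Y,\mathcal O_{\bullet,Y})\in\overline{s\mathbb G}$ and $(\phi,\phi^\sharp_\bullet):(Y,\mathcal O_{\bullet,Y})\to(X,\mathcal O_{\bullet,X})$; I must factor $\phi^\sharp_\bullet:\mathcal O_{\bullet,X}\to\phi_*\mathcal O_{\bullet,Y}$ uniquely through $c$. Fix $q\in Y$ with $\phi(q)=p$ and consider $g:(\mathcal O_{0,X})_p\to(\mathcal O_{0,Y})_q$. Both $(\mathcal O_{0,Y})_q$ (since $Y$ is localized) and $(\pi_0(\mathcal O_{\bullet,Y}))_q$ (since $(Y,\pi_0(\mathcal O_{\bullet,Y}))$ is a $C^\infty$-space) are local with residue field $\mathbb R$, so the surjection between them is a local homomorphism whose kernel is exactly the maximal ideal of $(\mathcal O_{0,Y})_q$. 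As $g$ is compatible with the maps to $\pi_0$ and $S_p$ maps to units in $(\pi_0(\mathcal O_{\bullet,X}))_p$, hence to units in $(\pi_0(\mathcal O_{\bullet,Y}))_q$, the set $g(S_p)$ avoids that maximal ideal, i.e. consists of units; by the universal property of smooth localization $g$ factors through $S_p^{-1}(\mathcal O_{0,X})_p$, and the same holds in each simplicial degree (degeneracies are ring maps, hence send units to units), compatibly with faces and degeneracies. Assembling over all $p$ gives the factorization $\mathcal O^{\mathrm{loc}}_{\bullet,X}\to\phi_*\mathcal O_{\bullet,Y}$, and uniqueness is immediate because each stalk map $(\mathcal O_{k,X})_p\to S_p^{-1}(\mathcal O_{k,X})_p$ is an epimorphism of $C^\infty$-rings, so $c$ is an epimorphism of sheaves.

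Finally, if $(X,\mathcal O_{\bullet,X})$ is already localized then $S_p=(\mathcal O_{0,X})_p\setminus\mathfrak m_p$ is precisely the set of units, so the localization changes nothing and $c$ is an isomorphism; hence the unit $\mathrm{id}_{\overline{s\mathbb G}}\to R\iota$ is an isomorphism. (Since $\overline{s\mathbb G}\subset s\mathbb G$ is full, this is also automatic once the adjunction exists.) The main obstacle, to repeat, is the $C^\infty$-algebraic fact that smooth localization at the complement of a point's maximal ideal yields a local $C^\infty$-ring, together with the bookkeeping needed to make the degree-wise localization a genuinely functorial operation on sheaves of simplicial $C^\infty$-rings.
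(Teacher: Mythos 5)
Your construction is the same as the paper's: for each $p$ you invert the germs whose value at the distinguished point $\mathfrak p:(\mathcal O_{0,X})_p\to(\pi_0(\mathcal O_{\bullet,X}))_p\to\mathbb R$ is nonzero, sheafify, and check that this is a coreflection onto $\overline{s\mathbb G}$ with the stated unit and counit. (The paper packages the same operation as the quotient by the ideal sheaf $\mathfrak t$ of ``trivial sections''; the two descriptions coincide because smooth localization at such a set is surjective, which is the content of Proposition \ref{InvertingModulo}.) The coreflection argument, the identification of the unit, and the reduction of membership in $\overline{s\mathbb G}$ to locality of the localized stalks all match the paper and are fine.

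There is, however, one genuine gap: the step ``since every element of $S_p$ already maps to a unit of $(\pi_0(\mathcal O_{\bullet,X}))_p$, and localization of simplicial modules is exact, $\pi_k(\mathcal O^{\mathrm{loc}}_{\bullet,X})\cong\pi_k(\mathcal O_{\bullet,X})$.'' That exactness argument is the standard one for \emph{algebraic} localization, which is a filtered colimit and hence flat. Smooth ($C^\infty$-) localization is not of this form: by Proposition \ref{InvertingModulo} the map $(\mathcal O_{0,X})_p\to S_p^{-1}(\mathcal O_{0,X})_p$ is \emph{surjective} with kernel the ideal $\mathfrak t_p$ of germs vanishing near $\mathfrak p$, so in each simplicial degree the localized stalk is $(\mathcal O_{k,X})_p/\mathfrak t_p(\mathcal O_{k,X})_p$ --- a quotient, i.e. a base change along a map that is not obviously flat (recall that the $\mathcal O_{k,X}$ are completely unrestricted here). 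Preservation of the homotopy groups is therefore equivalent to acyclicity of the subcomplex $\mathfrak t_p\mathcal M((\mathcal O_{\bullet,X})_p)$ of the Moore complex, and this is exactly what the paper's Proposition \ref{FullLocalization} proves, by a partition-of-unity argument that uses the hypothesis that the stalk of $\pi_0$ is a finitely generated, germ determined $C^\infty$-ring. You need to either reproduce that argument or cite such a statement explicitly; it cannot be replaced by ``localization is exact.'' Note also that this same point is needed earlier in your proof than you acknowledge: without it you do not yet know that $\pi_0(\mathcal O^{\mathrm{loc}}_{\bullet,X})\cong\pi_0(\mathcal O_{\bullet,X})$, hence that the localized pair is even an object of $s\mathbb G$. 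By contrast, the fact you single out as the main obstacle --- that $S_p^{-1}(\mathcal O_{0,X})_p$ is a local $C^\infty$-ring --- is comparatively immediate once one knows the localization is a quotient identifying germs that agree near $\mathfrak p$.
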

\begin{proof} Let $(X,\mathcal O_{\bullet,X})\in s\mathbb G$, and let $p\in X$. The stalk $(\mathcal O_{0,X})_p$ does not have to be local, but $(\pi_0(\mathcal O_{\bullet,X}))_p$ is local. Therefore, since $(\pi_0(\mathcal O_{\bullet,X}))_p$ is a quotient of $(\mathcal O_{0,X})_p$, we get a distinguished 
	\begin{equation}\mathfrak p:(\mathcal O_{0,X})_p\longrightarrow\mathbb R.\end{equation}
Let $(\mathcal O_{0,X})_{\mathfrak p}$ be the localization of $(\mathcal O_{0,X})_p$ at $\mathfrak p$, i.e. it is obtained by universally inverting every $f_p\in(\mathcal O_{0,X})_p$, whose value at $\mathfrak p$ is not $0$. As usual with $C^\infty$-rings, the natural map
	\begin{equation}\label{PointLocalization}\xymatrix{(\mathcal O_{0,X})_p\ar[r] & (\mathcal O_{0,X})_{\mathfrak p}}\end{equation}
is surjective. We will denote kernel of (\ref{PointLocalization}) by $\mathfrak t_p$. For an open $U\subseteq X$, we will call a section $\alpha\in\Gamma(U,\mathcal O_{0,X})$ \underline{trivial}, if $\forall p\in U$ $\alpha_p\in\mathfrak t_p$. It is clear that all trivial sections together comprise a sheaf of ideals $\mathfrak t\subset\mathcal O_{0,X}$. It is easy to check that $\forall p\in X$ $\mathfrak t_p$ is exactly the set of germs of $\mathfrak t$ at $p$. 

We define
	\begin{equation}\overline{\mathcal O}_{\bullet,X}:=\mathcal O_{\bullet,X}/\mathfrak t\mathcal O_{\bullet,X},\end{equation}
where the r.h.s. is taken in the category of sheaves of simplicial $C^\infty$-rings, i.e. we first divide, and then sheafify. Proposition \ref{FullLocalization} gives the middle isomorphism in 
	\begin{equation}\forall p\in X,\quad(\pi_0(\overline{\mathcal O}_{\bullet, X}))_p\cong\pi_0((\overline{\mathcal O}_{\bullet,X})_p)\cong
	\pi_0((\mathcal O_{\bullet,X})_p)\cong(\pi_0(\mathcal O_{\bullet,X}))_p,\end{equation}
and hence $(X,\overline{\mathcal O}_{\bullet,X})\in s\mathbb G$. Since stalks of $\overline{\mathcal O}_{\bullet,X}$ are local $C^\infty$-rings, we have that in fact $(X,\overline{\mathcal O}_{\bullet, X})\in\overline{s\mathbb G}$. It is clear that $(X,\mathcal O_{\bullet, X})\mapsto(X,\overline{\mathcal O}_{\bullet,X})$ extends to a functor $s\mathbb G\rightarrow\overline{s\mathbb G}$, and this functor is right adjoint to $\overline{s\mathbb G}\subset s\mathbb G$.

If $(X,\mathcal O_{\bullet,X})$ is localized, there are no non-zero trivial sections, and hence unit of the adjunction is an isomorphism. Using Proposition \ref{FullLocalization} again, we conclude that the map $\mathcal O_{\bullet,X}\rightarrow\overline{\mathcal O}_{\bullet,X}$ is a weak equivalence stalk-wise, i.e. counit of the adjunction consists of local weak equivalences.\end{proof}

\smallskip

Let $\mathbf\Gamma:s\mathbb G\rightarrow s\mathcal L^{op}$ be composition of the localization functor $s\mathbb G\rightarrow\overline{s\mathbb G}$ and the functor of global sections $\Gamma:\overline{s\mathbb G}\rightarrow s\mathcal L^{op}$. Propositions \ref{GlobalSections} and \ref{LocalizationFunctor} imply that $\mathbf\Gamma$ maps local weak equivalences to weak equivalences. Our next objective is to define a functor, going in the opposite direction.

\subsection{Spectrum of a simplicial $C^\infty$-ring}

Let $A_\bullet\in s\mathcal L$, i.e. $A_\bullet$ is a simplicial $C^\infty$-ring, s.t. $\pi_0(A_\bullet)$ is finitely generated as a $C^\infty$-ring. We define 
	\begin{equation}Sp(A_\bullet):=Sp(\pi_0(A_\bullet)).\end{equation} 
Let $U\subseteq Sp(A_\bullet)$ be open. For any $p\in U$ we have an evaluation
	\begin{equation}\mathfrak p:\xymatrix{A_0\ar[r] & \pi_0(A_\bullet)\ar[r]^p & \mathbb R.}\end{equation}
Let $\mathcal U:=\{a\in A_0\text{ s.t. }\forall p\in U,\mathfrak p(a)\neq 0\}$, we define $A_\bullet\{\mathcal U^{-1}\}$ to be the simplicial $C^\infty$-ring, obtained by universally inverting every $a\in\mathcal U$, and its degeneracies in $A_{\geq 1}$. 
It is clear that $U\mapsto A_\bullet\{\mathcal U^{-1}\}$ is a presheaf of simplicial $C^\infty$-rings on $Sp(A_\bullet)$. Let $\mathcal O_{\bullet,Sp(A_\bullet)}$ be the associated sheaf. 

\begin{proposition} Let  $(Sp(A_\bullet),\mathcal O_{\bullet, Sp(A_\bullet)})$ be defined as above. Then 
	\begin{equation}\pi_0(\mathcal O_{\bullet,Sp(A_\bullet)})\cong\mathcal O_{Sp(\pi_0(A_\bullet))}.\end{equation}\end{proposition}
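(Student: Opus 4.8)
The plan is to compare the two sheaves stalk by stalk, since a map of sheaves is an isomorphism if and only if it is so on all stalks, and both sides are sheaves by construction. First I would write down the natural comparison map: the quotient $A_\bullet \to \pi_0(A_\bullet)$ (a map of simplicial $C^\infty$-rings, viewing the target as a constant simplicial object) is compatible with the localizations $A_\bullet\{\mathcal U^{-1}\}$ and $\pi_0(A_\bullet)\{a^{-1}\}$ used to define the two structure presheaves on $Sp(A_\bullet)=Sp(\pi_0(A_\bullet))$; applying $\pi_0$ and sheafifying yields a canonical morphism $\pi_0(\mathcal O_{\bullet,Sp(A_\bullet)}) \to \mathcal O_{Sp(\pi_0(A_\bullet))}$. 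The claim is that this is an isomorphism, which I would check on stalks at an arbitrary $p \in Sp(A_\bullet)$.

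The key computation is to identify the stalk of $\pi_0(\mathcal O_{\bullet,Sp(A_\bullet)})$ at $p$. Since sheafification commutes with taking stalks, and since $\pi_0$ of a filtered colimit of simplicial rings is the filtered colimit of the $\pi_0$'s (filtered colimits are exact, hence commute with homology of the Moore complex), I would compute
\begin{equation}
(\pi_0(\mathcal O_{\bullet,Sp(A_\bullet)}))_p \cong \pi_0\big(\operatorname{colim}_{U \ni p} A_\bullet\{\mathcal U^{-1}\}\big) \cong \pi_0\big((A_\bullet)_{\mathfrak p}\big),
\end{equation}
where $(A_\bullet)_{\mathfrak p}$ denotes the localization of $A_\bullet$ at the multiplicative-type system of all $a \in A_0$ with $\mathfrak p(a) \neq 0$. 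On the other side, the stalk of $\mathcal O_{Sp(\pi_0(A_\bullet))}$ at $p$ is $(\pi_0(A_\bullet))_{\mathfrak p}$, the localization of $\pi_0(A_\bullet)$ at $p$. So the proposition reduces to the assertion that localization commutes with $\pi_0$: $\pi_0((A_\bullet)_{\mathfrak p}) \cong (\pi_0(A_\bullet))_{\mathfrak p}$. This is the analogue, at a point, of the statement the authors invoke as Proposition \ref{FullLocalization} in the proof of Proposition \ref{LocalizationFunctor}, and I would either cite that result directly or reprove it: smooth localization at $p$ is an exact functor on simplicial $C^\infty$-rings (it is a filtered colimit of the flat localizations $-\{a^{-1}\}$), so it commutes with homology of the Moore complex, giving $\pi_k((A_\bullet)_{\mathfrak p}) \cong (\pi_k(A_\bullet))_{\mathfrak p}$ for all $k$, in particular for $k=0$.

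The main obstacle, and the point requiring care, is the exactness of smooth localization of $C^\infty$-rings — i.e. that inverting an element $a$ (or a $\pi_0$-invertibility-determined system) is flat as an operation on the underlying $\mathbb{R}$-modules, so that it passes through the Moore complex homology. For ordinary commutative rings this is classical; for $C^\infty$-rings the smooth localization $A\{a^{-1}\}$ is not the naive ring-theoretic localization, but it is still flat over the underlying commutative $\mathbb{R}$-algebra (this is part of the standard theory of $C^\infty$-rings, e.g. \cite{MR91}), and the degeneracy condition "invert $a$ and its degeneracies in $A_{\geq 1}$" is exactly what makes the levelwise localizations assemble into a simplicial $C^\infty$-ring to which levelwise flatness applies. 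Once this is in hand, the stalkwise comparison is immediate and the proposition follows. A secondary routine point is that the presheaf $U \mapsto A_\bullet\{\mathcal U^{-1}\}$ has the colimit over basic opens $U_a \ni p$ equal to the pointwise localization $(A_\bullet)_{\mathfrak p}$, which follows from Proposition \ref{SpecGerm} and the fact that $\{U_a\}$ is a neighborhood basis of $p$ together with cofinality of basic opens.
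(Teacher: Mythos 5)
Your overall strategy coincides with the paper's: construct the canonical comparison map via the universal properties of localization and sheafification, then check it is an isomorphism on stalks. The difference is in how the stalk computation is justified, and this is where your argument has a soft spot. The paper does not use any exactness or flatness of smooth localization; instead it invokes Proposition \ref{InvertingModulo}, which says that the localization of $A_\bullet$ at the point $\mathfrak p$ is the \emph{quotient} $A_\bullet/\mathfrak m_{\mathfrak p}^g A_\bullet$ by the simplicial ideal of elements with zero germ at $\mathfrak p$ (this rests on partitions of unity and on $\pi_0(A_\bullet)$ being finitely generated). With that description both stalks become explicit quotients of $A_0$, namely $A_0/(\mathfrak k+\mathfrak m_{\mathfrak p}^g)$ and $(A_0/\mathfrak k)/\mathfrak m_p^g$, which visibly agree. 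Your route instead reduces everything to the assertion that smooth localization commutes with $\pi_\bullet$ because it is flat on underlying $\mathbb R$-modules, citing this as ``part of the standard theory of $C^\infty$-rings, e.g.\ [MR91].'' That citation does not carry the weight you put on it: flatness of $A\to A\{a^{-1}\}$ for an arbitrary (possibly infinitely generated) $C^\infty$-ring is not a result you can simply quote, and smooth localization is emphatically not the naive ring-theoretic localization (indeed, here it is a surjection, not an injection). If you want flatness for the germ localization you would have to prove it, e.g.\ by showing $\mathfrak m_{\mathfrak p}^g$ is a pure ideal via the same bump-function argument that drives Proposition \ref{InvertingModulo} --- at which point you have essentially reproduced the paper's lemma.

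The good news is that for $\pi_0$ you do not need flatness at all, so your argument can be repaired without it. The localization $A_\bullet\{\mathcal U^{-1}\}$ is a pushout of simplicial $C^\infty$-rings along $C^\infty(\mathbb R^{\mathcal U})\to C^\infty((\mathbb R\setminus\{0\})^{\mathcal U})$, hence a colimit construction, and $\pi_0$ is the reflexive coequalizer of $d_0,d_1:A_1\rightrightarrows A_0$ in $C^\infty$-rings; colimits commute with colimits, so $\pi_0(A_\bullet\{\mathcal U^{-1}\})\cong\pi_0(A_\bullet)\{[\mathcal U]^{-1}\}$ formally, and passing to the filtered colimit over neighborhoods of $p$ gives $\pi_0((A_\bullet)_{\mathfrak p})\cong(\pi_0(A_\bullet))_{\mathfrak p}$. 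Exactness (flatness) would only become relevant for the higher $\pi_k$, which this proposition does not claim, and which the paper handles separately in Propositions \ref{FullLocalization} and \ref{InvertingModulo} under germ-determinedness hypotheses. One further small point: your appeal to Proposition \ref{SpecGerm} for identifying the stalk colimit is misplaced --- that proposition concerns sections over $U_a$, not stalks; what you actually need is just cofinality of the basic opens $U_a\ni p$ and the observation that any $b\in A_0$ with $\mathfrak p(b)\neq 0$ lies in $\mathcal U_b$ with $p\in U_b$, so the colimit inverts exactly the elements not vanishing at $\mathfrak p$.
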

\begin{proof} Recall that $\mathcal O_{Sp(\pi_0(A_\bullet))}$ is sheafification of the presheaf 
	\begin{equation}U\mapsto\pi_0(A_\bullet)\{[\mathcal U]^{-1}\},\quad U\subseteq Sp(\pi_0(A_\bullet)),\end{equation} 
where $[\mathcal U]\subseteq\pi_0(A_\bullet)$ consists of those elements, that do not vanish at any $p\in U$, i.e. $\mathcal U\subseteq A_0$ is the pre-image of $[\mathcal U]$. Therefore we have a canonical $A_0\{\mathcal U^{-1}\}\rightarrow\pi_0(A_\bullet)\{[\mathcal U]^{-1}\}$, and it obviously factors through $\pi_0(A_\bullet\{\mathcal U^{-1}\})$. 
Using universal properties of localization and sheafification, we arrive at 
	\begin{equation}\label{HomotopyCommutesMorphism}\pi_0(\mathcal O_{\bullet,Sp(A_\bullet)})\rightarrow\mathcal O_{Sp(\pi_0(A_\bullet))}.\end{equation} 
To prove that (\ref{HomotopyCommutesMorphism}) is an isomorphism, we need to look at the stalks. Let $p\in Sp(A_\bullet)$, then $(\mathcal O_{Sp(\pi_0(A_\bullet))})_p\cong\pi_0(A_\bullet)/\mathfrak m_p^g$, $(\mathcal O_{\bullet,Sp(A_\bullet)})_p\cong A_\bullet/\mathfrak m_\mathfrak p^gA_\bullet$,
where $\mathfrak m_p^g\subset\pi_0(A_\bullet)$, $\mathfrak m_\mathfrak p^g\subset A_0$ consist of elements that have $0$ germs at respectively $p$ and $\mathfrak p$ (see Proposition \ref{InvertingModulo}). Let $\mathfrak k$ be the kernel of $A_0\rightarrow\pi_0(A_\bullet)$. It is easy to see that the l.h.s. of (\ref{HomotopyCommutesMorphism}) is $A_0/\mathfrak k+\mathfrak m_\mathfrak p^g$, while the r.h.s. is $(A_0/\mathfrak k)/\mathfrak m_p^g$. It is straightforward to see that $A_0/\mathfrak k+\mathfrak m_\mathfrak p^g=(A_0/\mathfrak k)/\mathfrak m_p^g$. 
\end{proof}

\smallskip

The previous proposition implies that $(Sp(A_\bullet),\mathcal O_{\bullet,A_\bullet})\in s\mathbb G$. Since construction of $\mathcal O_{\bullet,Sp(A_\bullet)}$ involves only inverting elements, it is clearly functorial. Moreover, it is obvious that stalks of $\mathcal O_{0,Sp(A_\bullet)}$ are local $C^\infty$-rings, so we have a functor 
	\begin{equation}\mathbf{Sp}:s\mathcal L^{op}\rightarrow\overline{s\mathbb G},\quad A_\bullet\mapsto(Sp(A_\bullet),\mathcal O_{\bullet,Sp(A_\bullet)}).\end{equation}
In general, it is not true that $A_\bullet$ is weakly equivalent to $\Gamma(Sp(A_\bullet),\mathcal O_{\bullet,Sp(A_\bullet)})$. Even in the simple case when $A_\bullet$ is a discrete $C^\infty$-ring, i.e. a constant simplicial diagram $\Delta^{op}\mapsto A\in\mathcal C^\infty\mathcal R$, if $A$ is not germ determined, then $A\ncong\Gamma(Sp(A),\mathcal O_{Sp(A)})$.

\smallskip

A simplicial $C^\infty$-ring $A_\bullet$ is \underline{geometric}, if it is homotopically finitely generated and germ determined, i.e. if\begin{itemize}
\item[1.] $\pi_0(A_\bullet)$ is a finitely generated, germ determined $C^\infty$-ring,
\item[2.] $\forall k\geq 1$, $\pi_k(A_\bullet)$ is a germ determined, locally complete $\pi_0(A_\bullet)$-module (see Section \ref{FirstSection} for definition of local completeness).\end{itemize}
Let $s\mathcal G\subset s\mathcal L$ be the full subcategory, consisting of geometric simplicial $C^\infty$-rings. If $(X,\mathcal O_{\bullet,X})\in\overline{s\mathbb G}$, from (\ref{GammaCommutes}) we know, that for any $k\geq 0$ $\pi_k(\Gamma(X,\mathcal O_{\bullet,X}))\cong\Gamma(X,\pi_k(\mathcal O_{\bullet, X}))$, and hence $\pi_0(\Gamma(X,\mathcal O_{\bullet,X}))$ is finitely generated, germ determined, and  $\pi_k(\Gamma(X,\mathcal O_{\bullet,X}))$ are all germ determined, locally complete $\pi_0(\Gamma(X,\mathcal O_{\bullet, X}))$-modules. Therefore, the functor of global sections $\Gamma$ maps $\overline{s\mathbb G}$ to $s\mathcal G^{op}$.

\begin{proposition} The functor of global sections $\Gamma:\overline{s\mathbb G}\rightarrow s\mathcal G^{op}$ is left adjoint to $\mathbf{Sp}:s\mathcal G^{op}\rightarrow\overline{s\mathbb G}$. Unit of this adjunction is an isomorphism, the counit consists of weak equivalences.\end{proposition}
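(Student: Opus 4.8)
The plan is to build the adjunction by combining the classical equivalence $\mathbf{Sp}\colon\mathcal G^{op}\rightleftarrows\mathbb G\colon\Gamma$ of Proposition \ref{DiscreteCase} with a stalk-wise analysis in the simplicial direction, and then to verify the unit and counit degree-wise on homotopy groups, where everything reduces to facts already established. First I would exhibit the adjunction: given $A_\bullet\in s\mathcal G$ and $(X,\mathcal O_{\bullet,X})\in\overline{s\mathbb G}$, a map $A_\bullet\to\Gamma(X,\mathcal O_{\bullet,X})$ in $s\mathcal C^\infty\mathcal R$ is the same as a compatible family of maps $A_k\to\Gamma(X,\mathcal O_{k,X})$; since the construction of $\mathcal O_{\bullet,Sp(A_\bullet)}$ only inverts elements of $A_0$ (and degeneracies), the universal property of smooth localization and of sheafification promotes such a family to a morphism $(X,\mathcal O_{\bullet,X})\to\mathbf{Sp}(A_\bullet)$ in $s\mathbb G$, landing in $\overline{s\mathbb G}$ because $(X,\mathcal O_{\bullet,X})$ is already localized. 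This is exactly the locally-ringed-space style argument used in Proposition \ref{DiscreteCase}, applied level-by-level, so adjointness is formal once one checks the naturality squares commute, which is routine.

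Next I would treat the unit $A_\bullet\to\Gamma(Sp(A_\bullet),\mathcal O_{\bullet,Sp(A_\bullet)})$. By (\ref{GammaCommutes}) applied to the localized space $\mathbf{Sp}(A_\bullet)$, for each $k$ we have $\pi_k(\Gamma(Sp(A_\bullet),\mathcal O_{\bullet,Sp(A_\bullet)}))\cong\Gamma(Sp(A_\bullet),\pi_k(\mathcal O_{\bullet,Sp(A_\bullet)}))$. For $k=0$ the previously proved isomorphism $\pi_0(\mathcal O_{\bullet,Sp(A_\bullet)})\cong\mathcal O_{Sp(\pi_0(A_\bullet))}$ together with Proposition \ref{DiscreteCase} (and germ-determinedness of $\pi_0(A_\bullet)$) identifies the target of $\pi_0$ with $\pi_0(A_\bullet)$ itself; for $k\geq 1$ one checks that $\pi_k(\mathcal O_{\bullet,Sp(A_\bullet)})$ is the sheaf of $\mathcal O_{Sp(\pi_0(A_\bullet))}$-modules associated to the $\pi_0(A_\bullet)$-module $\pi_k(A_\bullet)$ — this uses exactness of the localizations involved in forming $\mathcal O_{\bullet,Sp(A_\bullet)}$ on homotopy, i.e. that localizing $A_\bullet$ at $\mathcal U$ localizes the Moore complex — and then global sections of that sheaf recover $\pi_k(A_\bullet)$ precisely because $\pi_k(A_\bullet)$ is a germ determined, locally complete $\pi_0(A_\bullet)$-module, which is the content of the last Proposition of Section \ref{FirstSection}. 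Since $A_\bullet\in s\mathcal G$ was assumed geometric, all these hypotheses are in force, so the unit is an isomorphism on all $\pi_k$, hence a weak equivalence; in fact one gets an honest isomorphism of simplicial $C^\infty$-rings by the same token that $A\cong\Gamma(Sp(A),\mathcal O_{Sp(A)})$ for germ determined $A$.

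For the counit $\mathbf{Sp}(\Gamma(X,\mathcal O_{\bullet,X}))\to(X,\mathcal O_{\bullet,X})$, I would argue exactly as in Proposition \ref{DiscreteCase}: set $A_\bullet:=\Gamma(X,\mathcal O_{\bullet,X})$; applying Proposition \ref{DiscreteCase} to the $C^\infty$-space $(X,\pi_0(\mathcal O_{\bullet,X}))$ and using $\pi_0(A_\bullet)\cong\Gamma(X,\pi_0(\mathcal O_{\bullet,X}))$ (from (\ref{GammaCommutes})) shows the underlying map of spaces $Sp(A_\bullet)=Sp(\pi_0(A_\bullet))\to X$ is a homeomorphism. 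Under this identification, the map of structure sheaves $\mathcal O_{\bullet,Sp(A_\bullet)}\to\mathcal O_{\bullet,X}$ is a map of sheaves of simplicial $C^\infty$-rings; to see it is a weak equivalence I would pass to homotopy sheaves and check $\pi_k$. For $k=0$ this is the soft-sheaf surjectivity/injectivity argument of Proposition \ref{DiscreteCase} verbatim (softness of $\pi_0(\mathcal O_{\bullet,X})$ and localness of stalks). For $k\geq 1$, $\pi_k(\mathcal O_{\bullet,X})$ is a sheaf of $\pi_0(\mathcal O_{\bullet,X})$-modules, and by the last Proposition of Section \ref{FirstSection} it is determined by its module of global sections $\Gamma(X,\pi_k(\mathcal O_{\bullet,X}))\cong\pi_k(A_\bullet)$; the same module, run through $\mathbf{Sp}$, produces $\pi_k(\mathcal O_{\bullet,Sp(A_\bullet)})$, and the counit realizes the canonical comparison, which is then an isomorphism. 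Therefore the counit is a stalk-wise, hence a local, weak equivalence.

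The main obstacle I anticipate is the compatibility of homotopy groups with the localization-and-sheafification that defines $\mathcal O_{\bullet,Sp(A_\bullet)}$ — concretely, that for $\mathcal U\subseteq A_0$ as in the construction one has $\pi_k(A_\bullet\{\mathcal U^{-1}\})\cong\pi_k(A_\bullet)[\,\overline{\mathcal U}^{-1}\,]$ as $\pi_0(A_\bullet)[\overline{\mathcal U}^{-1}]$-modules, and that sheafification then yields exactly the module sheaf associated to $\pi_k(A_\bullet)$. This is the step where one must know that smooth localization is exact on the relevant Moore complexes (so that it commutes with taking $\pi_k$) and that it is flat enough to be compatible with the module-sheaf construction of the last Proposition of Section \ref{FirstSection}; everything else is either formal adjunction bookkeeping or a direct transcription of the discrete case in Proposition \ref{DiscreteCase}. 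I would isolate this computation as a lemma (it is presumably the referenced Propositions \ref{InvertingModulo}/\ref{FullLocalization} about inverting modulo an ideal) and then the rest of the proof goes through degree-wise without difficulty.
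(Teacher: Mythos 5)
There is a genuine error here, and it is not merely a labelling issue. Since $\Gamma$ is the \emph{left} adjoint, the unit lives on the space side, $(X,\mathcal O_{\bullet,X})\to\mathbf{Sp}(\Gamma(X,\mathcal O_{\bullet,X}))$, and the counit on the ring side, $A_\bullet\to\Gamma(Sp(A_\bullet),\mathcal O_{\bullet,Sp(A_\bullet)})$; you have swapped them, and with the swap you have attached the wrong properties to the wrong maps. Your claim that $A_\bullet\to\Gamma(Sp(A_\bullet),\mathcal O_{\bullet,Sp(A_\bullet)})$ is ``an honest isomorphism of simplicial $C^\infty$-rings by the same token that $A\cong\Gamma(Sp(A),\mathcal O_{Sp(A)})$ for germ determined $A$'' is false: being geometric only constrains the homotopy groups $\pi_k(A_\bullet)$, not the individual levels $A_k$, which need not be germ determined or even finitely generated (think of a cofibrant $A_\bullet$ with levels $C^\infty(\mathbb R^S)$, $S$ infinite, which the paper notes is not recovered from its spectrum). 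Already $A_0\to\Gamma(Sp(A_\bullet),\mathcal O_{0,Sp(A_\bullet)})$ can fail to be injective; this map is only a weak equivalence, and proving that is the hard part of the proposition. Conversely, the map $\mathbf{Sp}(\Gamma(X,\mathcal O_{\bullet,X}))\to(X,\mathcal O_{\bullet,X})$, which you only establish as a stalk-wise weak equivalence, must be shown to be an isomorphism of sheaves of simplicial $C^\infty$-rings (the paper does this level-by-level as in Proposition \ref{DiscreteCase}, using softness of $\mathcal O_{0,X}$ and localness of its stalks); your weaker conclusion does not yield the assertion that the unit is an isomorphism.

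The second gap is the one you flag yourself: the compatibility of $\pi_k$ with the localization $A_\bullet\mapsto\overline A_\bullet$ and with sheafification. This is not a formal exactness statement that can be quoted from elsewhere; it is the actual content of ``the counit consists of weak equivalences''. The paper's argument has two nontrivial steps: first, $A_\bullet\to\overline A_\bullet$ is a weak equivalence (Proposition \ref{FullLocalization}, a partition-of-unity argument showing $\mathfrak m\mathcal M(A_\bullet)$ is acyclic, which already uses that $\pi_0(A_\bullet)$ is germ determined); second, $\overline A_\bullet\to\Gamma(Sp(A_\bullet),\mathcal O_{\bullet,Sp(A_\bullet)})$ is a weak equivalence, proved by a direct cycle/boundary analysis on Moore complexes --- injectivity on homology uses that each $\pi_k(A_\bullet)$ is germ determined, and surjectivity uses a partition of unity together with local completeness of each $\pi_k(A_\bullet)$ to assemble a locally finite family of cycles into a single homology class. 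Your alternative route through the module--sheaf equivalence of Section \ref{FirstSection} could in principle be made to work, but only after proving precisely these two facts; as written, the proposal assumes the key computation it would need to carry out.
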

\begin{proof} Let $(X,\mathcal O_{\bullet,X})\in\overline{s\mathbb G}$, $A_\bullet\in s\mathcal G$. Any morphism $f_\bullet:A_\bullet\rightarrow\Gamma(X,\mathcal O_{\bullet,X})$ induces $\pi_0(f_\bullet):\pi_0(A_\bullet)\rightarrow\pi_0(\Gamma(X,\mathcal O_{\bullet,X}))\cong\Gamma(X,\pi_0(\mathcal O_{\bullet,X}))$, and hence a continuous map $\phi:X\rightarrow Sp(A_\bullet)$. Since stalks of $\mathcal O_{0,X}$ are local, universal property of localization implies that $f_\bullet$ defines $\phi^\sharp_\bullet:\mathcal O_{\bullet,Sp(A_\bullet)}\rightarrow\phi_*(\mathcal O_{\bullet,X})$. 

Let $\overline{A}_\bullet$ be obtained by universally inverting every $a\in A_0$, s.t. $\mathfrak p(a)\neq 0$ for any $p\in Sp(\pi_0(A_\bullet))$. From Proposition \ref{InvertingModulo} we know that $A_\bullet\rightarrow\overline{A}_\bullet$ is surjective, and hence, using universal property of sheafification and proceeding as in the proof of Proposition \ref{DiscreteCase}, we conclude that $f_\bullet\mapsto(\phi,\phi^\sharp_\bullet)$ is a bijective correspondence. It is clearly functorial, i.e. $\mathbf{Sp}$ is right adjoint to $\Gamma$.

Suppose that $A_\bullet=\Gamma(X,\mathcal O_{\bullet,X})$. Since $\pi_0(A_\bullet)\cong\Gamma(X,\pi_0(\mathcal O_{\bullet,X}))$, obviously $X\cong Sp(A_\bullet)$ as topological spaces. Then, since $\mathcal O_{0,X}$ is soft, and its stalks are local, one proves that $\mathcal O_{\bullet,Sp(A_\bullet)}\cong\mathcal O_{\bullet, X}$ as in the discrete case (Proposition \ref{DiscreteCase}). So unit of the adjunction is indeed an isomorphism.

Now we compare $A_\bullet$ and $\Gamma(Sp(A_\bullet),\mathcal O_{Sp(A_\bullet)})$. From Proposition \ref{FullLocalization} we know that $A_\bullet\rightarrow\overline{A}_\bullet$ is a weak equivalence. It remains to show that sheafification preserves this homotopy type, i.e. that
	\begin{equation}\gamma:\overline{A}_\bullet\rightarrow\Gamma(Sp(A_\bullet),\mathcal O_{Sp(A_\bullet)})\end{equation}
is a weak equivalence. We can assume that $A_\bullet=\overline{A}_\bullet$. Let $\alpha\in\mathcal M(A_\bullet)$ be a cycle, s.t. $\gamma(\alpha)$ is a boundary. This implies that $\forall p\in Sp(A_\bullet)$, $\gamma(\alpha)_p$ is a boundary in $(\mathcal O_{\bullet,Sp(A_\bullet)})_p$. From Proposition \ref{InvertingModulo} we conclude that $(\mathcal O_{\bullet,Sp(A_\bullet)})_p\cong A_\bullet/\mathfrak m_{\mathfrak p}^g A_\bullet$, therefore we can construct $\beta\in A_0$, s.t. $\alpha\beta$ is homologous to $0$, yet $\mathfrak p(\beta)\neq 0$. This implies that $\alpha\mapsto 0$ in $H_\bullet(\mathcal M(A_\bullet))_p$. Since this happens for every $p$, and we assume $\pi_k(A_\bullet)$ to be germ determined $\forall k\geq 0$, $\alpha$ has to be a boundary itself.

Now let $\xi\in\Gamma(Sp(A_\bullet),\mathcal O_{\bullet,Sp(A_\bullet)})$ be a cycle. Let $p\in Sp(A_\bullet)$, then $\xi_p\in(\mathcal O_{\bullet,Sp(A_\bullet)})_p\cong A_\bullet/\mathfrak m_\mathfrak p^gA_\bullet$ is a cycle. It is easy to see that for any $\alpha\in A_0$, whose support is contained in a small open set around $\mathfrak p$, $\alpha\xi_p$ extends to a cycle in $A_\bullet$. Therefore, using partition of unity, we can find a family of cycles $\{\beta_i\}\subseteq A_\bullet$, s.t. $\{\gamma(\beta_i)\}$ is locally finite and
	\begin{equation}\forall p\in Sp(A_\bullet),\quad \underset{i}\Sigma(\beta_i)_\mathfrak p=\xi_p.\end{equation}
Since $\forall k\geq 0$, $\pi_k(A_\bullet)$ is locally complete, the corresponding family of homology classes $\{[\beta_i]\}$ adds to one class $[\beta]\in H_\bullet(\mathcal M(A_\bullet))$. It is clear that $\gamma(\beta)$ is homologous to $\xi$.\end{proof}

\bigskip

We have started with the category $s\mathbb G$, and we were interested in the simplicial localization of $s\mathbb G$ with respect to local weak equivalences. We have constructed two adjunctions:
	\begin{equation}\overline{s\mathbb G}\rightleftarrows s\mathcal G^{op},\quad\overline{s\mathbb G}\rightleftarrows s\mathbb G.\end{equation}
In each case the unit and counit consist of local weak equivalences (in the case of $s\mathbb G$, $\overline{s\mathbb G}$), or weak equivalences (in the case of $s\mathcal G$). Therefore simplicial localization of $s\mathcal G^{op}$, with respect to weak equivalences, is weakly equivalent to simplicial localization of $s\mathbb G$ with respect to local weak equivalences. 

\bigskip

We finish this section with technical results, that were used in some of the propositions above. 

\begin{proposition}\label{InvertingModulo} Let $A\in\mathcal C^\infty\mathcal R$, and let $\mathfrak I\subset A$ be an ideal, s.t. $A/\mathfrak I$ is finitely generated. Let $V\subseteq Sp(A/\mathfrak I)$ be closed, and let 
	\begin{equation}\Lambda:=\{a\in A\text{ s.t. }\forall p\in V\text{ }\mathfrak p(a)\neq 0\}.\footnote{Here $\mathfrak p$ is the composition 
	$A\rightarrow A/\mathfrak I\overset{p}\rightarrow\mathbb R$.}\end{equation}  
Let $A_V$ be obtained by inverting every $a\in\Lambda$. Let $S\subseteq A$ be a set of generators. Then
	\begin{equation}A_V\cong C^\infty(\mathbb R^S)/\mathfrak A+\mathfrak m_{\mathcal V}^g,\end{equation}
where $\mathfrak A\subset C^\infty(\mathbb R^S)$ is the kernel of $C^\infty(\mathbb R^S)\rightarrow A$, and $\mathfrak m_{\mathcal V}^g\subset C^\infty(\mathbb R^S)$ consists of functions, that have $0$ germ at $C^\infty(\mathbb R^S)\rightarrow A\rightarrow A/\mathfrak I\overset{p}\rightarrow\mathbb R$ for any $p\in V$.\end{proposition}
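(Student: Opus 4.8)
The plan is to fix the presentation of $A$ afforded by the generating set $S$, reduce the statement to the case of the free $C^\infty$-ring $C^\infty(\mathbb R^S)$ by a universal-property argument, and then identify the resulting localization with a ring of germs along a closed subset of $\mathbb R^S$. Concretely: since $S$ generates $A$, the canonical map $\pi:C^\infty(\mathbb R^S)\to A$ is surjective with kernel $\mathfrak A$. Composing the $\mathbb R$-points of $A/\mathfrak I$ indexed by $p\in V$ with $\pi$, and identifying the $\mathbb R$-points of $C^\infty(\mathbb R^S)$ with $\mathbb R^S$ as in Section \ref{FirstSection}, one obtains a subset $\mathcal V\subseteq\mathbb R^S$. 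Because $C^\infty(\mathbb R^S)\to A/\mathfrak I$ is surjective, $Sp(A/\mathfrak I)$ is a zero set, hence closed in $\mathbb R^S$, and $\mathcal V$ is the image of the closed set $V$ under the embedding $Sp(A/\mathfrak I)\hookrightarrow\mathbb R^S$. Let $\widetilde\Lambda\subseteq C^\infty(\mathbb R^S)$ be the preimage of $\Lambda$; unwinding the definitions, $\widetilde\Lambda=\{f\ :\ f(\mathfrak q)\neq 0\text{ for all }\mathfrak q\in\mathcal V\}$, and surjectivity of $\pi$ makes the image of $\widetilde\Lambda$ in $A$ equal to $\Lambda$.

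Next I would use the universal property of smooth localization twice. A $C^\infty$-ring map out of $A_V=A\{\Lambda^{-1}\}$ is the same datum as a map out of $C^\infty(\mathbb R^S)$ annihilating $\mathfrak A$ and carrying $\widetilde\Lambda$ into the units, which in turn is the same as a map out of $C^\infty(\mathbb R^S)\{\widetilde\Lambda^{-1}\}$ annihilating the image of $\mathfrak A$. Hence $A_V\cong C^\infty(\mathbb R^S)\{\widetilde\Lambda^{-1}\}/(\text{image of }\mathfrak A)$, and it remains to prove the free-ring isomorphism $C^\infty(\mathbb R^S)\{\widetilde\Lambda^{-1}\}\cong C^\infty(\mathbb R^S)/\mathfrak m_{\mathcal V}^g$; dividing both sides by the image of $\mathfrak A$ then yields $C^\infty(\mathbb R^S)/(\mathfrak A+\mathfrak m_{\mathcal V}^g)$, as claimed.

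For the free case, $\widetilde\Lambda$ is multiplicatively closed, so $C^\infty(\mathbb R^S)\{\widetilde\Lambda^{-1}\}=\mathrm{colim}_{g\in\widetilde\Lambda}\,C^\infty(\mathbb R^S)\{g^{-1}\}$, and $C^\infty(\mathbb R^S)\{g^{-1}\}\cong C^\infty(U_g)$ for $U_g=\{g\neq 0\}$. The sets $U_g$ with $g\in\widetilde\Lambda$ are precisely the cozero neighbourhoods of $\mathcal V$, and they are cofinal among all open neighbourhoods of the closed set $\mathcal V$ (on $\mathbb R^n$ every open set is such a cozero set, by \cite{MR91}, lemma I.1.4), so the colimit is the ring of germs of smooth functions along $\mathcal V$. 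That ring is $C^\infty(\mathbb R^S)/\mathfrak m_{\mathcal V}^g$: the restriction map from $C^\infty(\mathbb R^S)$ is surjective — multiply a function smooth on a neighbourhood $W$ of $\mathcal V$ by a bump function equal to $1$ near $\mathcal V$ and supported in $W$ — and its kernel consists of the functions vanishing on some open neighbourhood of $\mathcal V$, which is exactly $\mathfrak m_{\mathcal V}^g$, since a function with zero germ at each point of $\mathcal V$ vanishes on the union of the witnessing neighbourhoods.

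The delicate point is this last step when $S$ is infinite: smooth Urysohn and bump functions, and the fact that every open set is a cozero set, hold on $\mathbb R^n$ but not verbatim on $\mathbb R^S$, which need not even be first countable. The remedy is to exploit that $A/\mathfrak I$ is finitely generated, so that $\mathcal V$ lies inside $Sp(A/\mathfrak I)$, which is homeomorphic to a closed subset of some $\mathbb R^m$; a germ of a smooth finite function along $\mathcal V$ is controlled by a finite coordinate subspace, so the soft-function constructions can be carried out there and transported back. With this finite-dimensional reduction in place — together with the standard identification $C^\infty(\mathbb R^S)\{g^{-1}\}\cong C^\infty(U_g)$, which already holds in the finitely generated setting (\cite{Du81}, \cite{MR91}) — the argument goes through, and this reduction is the only substantive obstacle.
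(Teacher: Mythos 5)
Your proof is correct and rests on the same mechanism as the paper's: both arguments reduce every germ/bump-function construction to a finite coordinate subspace $\mathbb R^F$ chosen large enough that $C^\infty(\mathbb R^F)\to A/\mathfrak I$ is surjective (so that the image of $V$ there is closed), and then use partitions of unity to show that $\mathfrak m_{\mathcal V}^g$ dies after inverting $\Lambda$ while every element of $\Lambda$ becomes invertible modulo $\mathfrak m_{\mathcal V}^g$. The paper packages this as a direct verification that $C^\infty(\mathbb R^S)/(\mathfrak A+\mathfrak m_{\mathcal V}^g)$ satisfies the universal property of $A_V$, whereas you route through the free ring and the filtered colimit of the $C^\infty(U_g)$ over cozero neighbourhoods of $\mathcal V$ --- a difference of presentation, not of substance.
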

\begin{proof} Since $\mathfrak A\subseteq\mathfrak A+\mathfrak m_\mathcal V^g$, there is a canonical $\phi:A\rightarrow C^\infty(\mathbb R^S)/\mathfrak A+\mathfrak m_\mathcal V^g$. First we prove that $\forall a\in\Lambda$ $\phi(a)$ is invertible. Let $\widetilde{a}\in C^\infty(\mathbb R^S)$ be any pre-image of $a$. Since $A/\mathfrak I$ is finitely generated, we can choose a finite $F\subseteq S$, s.t. $C^\infty(\mathbb R^F)\hookrightarrow C^\infty(\mathbb R^S)\rightarrow A/\mathfrak I$ is surjective, and $\widetilde{a}\in C^\infty(\mathbb R^F)$. Let $\mathcal V_F$ be the image of $V$ in $\mathbb R^F$. Since $\widetilde{a}(p)\neq 0$ $\forall p\in\mathcal V$, using partition of unity on $\mathbb R^F$, we can find $\widetilde{b}\in C^\infty(\mathbb R^F)$, s.t. $\widetilde{a}\widetilde{b}-1=0$ in a neighborhood of $\mathcal V_F$. This implies that $\widetilde{a}\widetilde{b}-1$, considered as an element of $C^\infty(\mathbb R^S)$, has $0$ germ in a neighborhood of every $p\in\mathcal V$, i.e. $\widetilde{a}\widetilde{b}-1\in\mathfrak m_\mathcal V^g$, and hence $\phi(a)$ is invertible in $C^\infty(\mathbb R^S)/\mathfrak A+\mathfrak m_\mathcal V^g$. 

Since $\phi$ is obviously surjective, for any $A\rightarrow B$, that inverts every $a\in\Lambda$, there is at most one factorization through $\phi$. To prove that at least one such factorization exists, we construct $\chi:C^\infty(\mathbb R^S)/\mathfrak A+\mathfrak m_\mathcal V^g\rightarrow A_V$, s.t. the following diagram is commutative. 
	\begin{equation}\xymatrix{& A\ar[ld]_{\psi_0}\ar[rd]^\phi &\\ A_V && C^\infty(\mathbb R^S)/\mathfrak A+\mathfrak m_\mathcal V^g\ar[ll]^{\chi}}\end{equation} 
To construct this $\chi$, it is enough to show that kernel of $C^\infty(\mathbb R^S)\rightarrow A_V$ contains $\mathfrak m_\mathcal V^g$. Let $\widetilde{a}\in\mathfrak m_\mathcal V^g$, and choose a finite $F\subseteq S$, s.t. $\widetilde{a}\in C^\infty(\mathbb R^F)$, and $C^\infty(\mathbb R^F)\rightarrow A/\mathfrak I$ is surjective, let $\mathcal V_F$ be the image of $V$ in $\mathbb R^F$. By assumption, there is an open $\mathcal U_F\supseteq\mathcal V_F$, s.t. $\widetilde{a}=0$ on $\mathcal U_F$. Using paracompactness of $\mathbb R^F$, we can find $\widetilde{b}\in C^\infty(\mathbb R^F)$, s.t. $\widetilde{b}(p)\neq 0$ $\forall p\in\mathcal V_F$ and $supp(\widetilde{b})\subseteq\mathcal U_F$. This means that $\widetilde{a}\widetilde{b}=0$, yet the image of $\widetilde{b}$ in $A_V$ is invertible. Hence $\widetilde{a}\mapsto 0\in A_V$.\end{proof}

\begin{proposition}\label{FullLocalization} Let $A_\bullet$ be a simplicial $C^\infty$-ring, s.t. $\pi_0(A_\bullet)$ is finitely generated and germ determined. Let $\overline{A}_\bullet$ be obtained by universally inverting every $a\in A_0$, s.t. $\mathfrak p(a)\neq 0$, $\forall p\in Sp(\pi_0(A_\bullet))$. Then the natural map $\phi_\bullet:A_\bullet\rightarrow\overline{A}_\bullet$ is a weak equivalence.\end{proposition}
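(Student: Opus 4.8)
The plan is to show that $\phi_\bullet$ induces an isomorphism on every homotopy group $\pi_k(-) = H_k(\mathcal M(-))$, and the first move is to reduce to inverting a single element. The set $\Lambda := \{a\in A_0 \mid \mathfrak p(a)\neq 0 \text{ for all } p\in Sp(\pi_0(A_\bullet))\}$ is multiplicatively closed, and for $C^\infty$-rings one has $A_\bullet\{a^{-1}\}\{b^{-1}\}\cong A_\bullet\{(ab)^{-1}\}$, so $\overline A_\bullet$ is the filtered colimit of the smooth localizations $A_\bullet\{a^{-1}\}$ (at a single $a\in\Lambda$ together with its degeneracies), taken over $\Lambda$ ordered by divisibility. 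Since forming Moore complexes and homology both commute with filtered colimits, it suffices to prove that $A_\bullet\to A_\bullet\{a^{-1}\}$ is a weak equivalence for one fixed $a\in\Lambda$.

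Such an $a$ becomes invertible after passing to $\pi_0$. Indeed, $\pi_0(A_\bullet)$ is finitely generated and germ determined, so by Proposition \ref{DiscreteCase} it is the ring of global sections $\Gamma(Sp(\pi_0(A_\bullet)),\mathcal O_{Sp(\pi_0(A_\bullet))})$ of a sheaf of local $C^\infty$-rings; an element of this ring that does not vanish at any point of $Sp(\pi_0(A_\bullet))$ is a unit, since its inverses in the local stalks glue to a global section. Hence the image $[a]\in\pi_0(A_\bullet)$ of our chosen $a$ is a unit.

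It remains to see that smoothly localizing at $a$ does not alter the homotopy type. Writing $A_\bullet\{a^{-1}\}$ as the level-wise pushout, in $C^\infty$-rings, of $A_\bullet$ and $C^\infty(\mathbb R\setminus\{0\})$ over $C^\infty(\mathbb R)$ (the map $C^\infty(\mathbb R)\to A_\bullet$ picking out $a$ and its degeneracies), one uses that a smooth localization behaves like a flat base change. Concretely, Proposition \ref{InvertingModulo}, applied at each level $k$ with $\mathfrak I_k = \ker(A_k\to\pi_0(A_\bullet))$ and $V = Sp(\pi_0(A_\bullet))$, presents $A_k\to A_k\{a^{-1}\}$ as an explicit surjection of $C^\infty$-rings, and from this description one checks that the operation is exact on Moore complexes, whence $\pi_k(A_\bullet\{a^{-1}\})\cong\pi_k(A_\bullet)\otimes_{\pi_0(A_\bullet)}\pi_0(A_\bullet)\{[a]^{-1}\}$. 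Inverting the unit $[a]$ inverts only elements of $\pi_0(A_\bullet)$ that are non-vanishing on $Sp(\pi_0(A_\bullet))$, and these are all units (again by germ determinedness and Proposition \ref{DiscreteCase}); so $\pi_0(A_\bullet)\{[a]^{-1}\}\cong\pi_0(A_\bullet)$ and therefore $\pi_k(A_\bullet)\otimes_{\pi_0(A_\bullet)}\pi_0(A_\bullet)\{[a]^{-1}\}\cong\pi_k(A_\bullet)$ for all $k$, proving that $\phi_\bullet$ is a weak equivalence.

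The step I expect to be the main obstacle is the exactness claim just used. Because the coproduct of $C^\infty$-rings is not the tensor product of the underlying $\mathbb R$-algebras, one cannot simply invoke flatness of ordinary localization; one must instead exploit the explicit quotient description of smooth localizations from Proposition \ref{InvertingModulo}, together with flatness of the restriction maps $C^\infty(\mathbb R^n)\to C^\infty(U)$ for open $U$, to follow cycles and boundaries through the localization and to identify $\pi_*(A_\bullet\{a^{-1}\})$ with the localized homotopy modules. The hypothesis that $\pi_0(A_\bullet)$ be germ determined is used precisely here: it is what turns ``non-vanishing on $Sp(\pi_0(A_\bullet))$'' into ``unit in $\pi_0(A_\bullet)$'', which is exactly what makes the localization induced on the higher homotopy modules trivial; without it the proposition fails.
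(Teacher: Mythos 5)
Your reduction to a single element and your observation that germ determinedness of $\pi_0(A_\bullet)$ turns ``nowhere vanishing on $Sp(\pi_0(A_\bullet))$'' into ``unit in $\pi_0(A_\bullet)$'' are both fine (the latter is used in the paper's proof too). But the step you yourself flag as the main obstacle --- that smooth localization at $a$ is exact on Moore complexes, so that $\pi_k(A_\bullet\{a^{-1}\})\cong\pi_k(A_\bullet)\otimes_{\pi_0(A_\bullet)}\pi_0(A_\bullet)\{[a]^{-1}\}$ --- is essentially the whole content of the proposition, and it is asserted rather than proved. Moreover, the one concrete tool you cite for it does not apply: Proposition \ref{InvertingModulo} describes the result of inverting the \emph{entire} multiplicative set $\Lambda$ of elements nonvanishing on $V$, and it is only for that full localization that one gets a surjection $A\rightarrow A_V$ with kernel the germ ideal $\mathfrak m^g_{\mathcal V}$. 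The single-element smooth localization $A\{a^{-1}\}$ is instead the $C^\infty$-pushout along $C^\infty(\mathbb R)\rightarrow C^\infty(\mathbb R\setminus\{0\})$ and is generally \emph{not} a quotient of $A$ (already $C^\infty(\mathbb R)\{x^{-1}\}\cong C^\infty(\mathbb R\setminus\{0\})$, and restriction is far from surjective). So the ``explicit surjection of $C^\infty$-rings'' you want at each simplicial level does not exist for a single $a$, and the proposed route to exactness collapses; your filtered-colimit reduction, while harmless in principle, is what steers you away from the statement that Proposition \ref{InvertingModulo} actually provides.

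The paper's proof keeps the full localization precisely for this reason: Proposition \ref{InvertingModulo} gives $\overline{A}_0\cong A_0/\mathfrak m$ with $\mathfrak m$ the ideal of elements whose germ vanishes along $Sp(\pi_0(A_\bullet))$, hence $\overline{A}_\bullet\cong A_\bullet/\mathfrak m A_\bullet$, and the problem becomes showing that the subcomplex $\mathfrak m\mathcal M(A_\bullet)$ is acyclic. That is done by a partition-of-unity argument which replaces your flatness claim: for a finite sum $\beta=\sum b_i\beta_i$ with $b_i\in\mathfrak m$ one finds $a\in A_0$ with $a\beta=0$ and $[a]$ invertible in $\pi_0(A_\bullet)$ (so a cycle in $\mathfrak m\mathcal M(A_\bullet)$ already bounds in $\mathcal M(A_\bullet)$), and one finds $b\in\mathfrak m$ with $bb_i=b_i$, which pushes a bounding chain $\alpha$ into $\mathfrak m A_{k+1}$ via $d(b\alpha)=b\beta=\beta$. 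Some version of this two-step partition-of-unity argument (or a genuine proof of your exactness claim, which amounts to the same work) is what is missing from your write-up.
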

\begin{proof} From Proposition \ref{InvertingModulo} we know that $\overline{A}_0\cong A_0/\mathfrak m$, for some ideal $\mathfrak m\subseteq A_0$. It is easy to see that $\overline{A}_\bullet\cong A_\bullet/\mathfrak mA_\bullet$. 

Clearly $\mathfrak m\mathcal M(A_\bullet)$ is a subcomplex, and hence to prove the proposition it is enough to show that $\mathfrak m\mathcal M(A_\bullet)$ is acyclic. Let $k\geq 0$, and let $\beta=\underset{1\leq i\leq n}\Sigma b_i \beta_i$, where $b_i\in\mathfrak m$ and $\beta_i\in A_k$. Since $\pi_0(A_\bullet)$ is finitely generated, and there are only finitely many of $b_i$'s, one can use partition of unity to find $a\in A_0$, s.t. $a\beta=0$, yet $\mathfrak p(a)\neq 0$ $\forall\mathfrak p\in\mathcal V$. The latter implies that the image of $a$ in $\pi_0(A_\bullet)$ is invertible ($\pi_0(A_\bullet)$ is assumed to be germ determined), and hence, if $\beta$ is a cycle, it is necessarily a trivial one.

Now assume there is $\alpha\in A_{k+1}$, s.t. $d\alpha=\beta$. Using partition of unity again, we can find $b\in\mathfrak m$, s.t. $b b_i=b_i$ for all $1\leq i\leq n$. 
Then obviously $b\alpha\in\mathfrak mA_{k+1}$ and $d(b\alpha)=b\beta=\beta$.\end{proof}


\section{Spivak's construction}\label{ThirdSection}

In this section we show that construction of derived manifolds, done in \cite{Sp10}, can be equivalently performed in the category $s\mathcal L$ of simplicial $C^\infty$-rings $A_\bullet$, s.t. $\pi_0(A_\bullet)$ is finitely generated.

The meeting point of Spivak's construction and the usual homotopy theory of simplicial $C^\infty$-rings is the category $s\mathbb G$ of simplicial $C^\infty$-spaces, defined in Section \ref{SecondSection}. We start with recalling (and somewhat reformulating) the constructions of \cite{Sp10}.

\bigskip

Let $\mathbf{CG}$ be the category of compactly generated Hausdorff spaces. We define a category $\mathbf{RS}$ as follows:\begin{itemize}
\item[1.] objects are pairs $(X,O_{\bullet,X})$, where $X\in\mathbf{CG}$ and $O_{\bullet,X}$ is a functor
	\begin{equation}\xymatrix{Open(X)^{op}\times\mathcal C^\infty\ar[r] & SSet,}\end{equation}
\item[2.] morphisms are pairs $(\phi,\phi^\sharp)$, where $\phi:X\rightarrow Y$ is a continuous map, and $\phi^\sharp$ is a natural transformation
	\begin{equation}\xymatrix{Open(Y)^{op}\times\mathcal C^\infty\ar[rr]^{\phi^{-1}\times Id}\ar[rdd]
	&& Open(X)^{op}\times\mathcal C^\infty\ar[ldd]\\ 
	&\overset{\phi^\sharp}\longrightarrow&\\ 
	& SSet &}\end{equation}
\end{itemize}
For any fixed $X\in\mathbf{CG}$, we will denote by $\mathbf{RS}(X)\subset\mathbf{RS}$ the full subcategory consisting of pre-sheaves on $X$. We equip each $\mathbf{RS}(X)$ with the injective closed model structure, where $O_{\bullet,X}\rightarrow O'_{\bullet,X}$ is a weak equivalence or cofibration, if $O_{\bullet,X}(U,\mathbb R^n)\rightarrow O'_{\bullet,X}(U,\mathbb R^n)$ is respectively a weak equivalence or cofibration of simplicial sets, $\forall U\in Open(X)$, $\forall n\geq 0$.

\smallskip

Since we have not required $O_{\bullet,X}(U,-):\mathcal C^\infty\rightarrow SSet$ to be product preserving, the natural maps 
	\begin{equation}\label{LocProduct}\xymatrix{O_{\bullet,X}(U,\mathbb R^{m+n})\ar[r] & O_{\bullet,X}(U,\mathbb R^m)\times O_{\bullet,X}(U,\mathbb R^n)}\end{equation}
are not required to be isomorphisms, and not even weak equivalences, i.e. $O_{\bullet,X}$ is not necessarily a pre-sheaf of $C^\infty$-rings. Similarly, having a hypercover $\{U_i\}$ of $U$, the natural map
	\begin{equation}\label{LocSheaf}O_{\bullet,X}(U,\mathbb R^k)\rightarrow holim(O_{\bullet,X}(U_i,\mathbb R^k))\end{equation}
does not have to be a weak equivalence, i.e. $O_{\bullet,X}(-,\mathbb R^k)$ is not necessarily a homotopy sheaf of simplicial sets.

Using the fact that $\mathbf{RS}(X)$ is a left proper, cellular, simplicial closed model category, one can perform a left localization of $\mathbf{RS}(X)$ with respect to (\ref{LocProduct}) and (\ref{LocSheaf}) (e.g. \cite{Hi09}). The result is a left proper, simplicial closed model category, that we will denote by $\mathbf{Shv(X)}$. Moreover, any continuous map $\phi:X\rightarrow Y$ induces a Quillen adjunction
	\begin{equation}\phi^*:\mathbf{Shv}(Y)\rightleftarrows\mathbf{Shv}(X):\phi_*.\end{equation}
\underline{Homotopy sheaves of homotopy simplicial $C^\infty$-rings} on $X$ are the fibrant objects in $\mathbf{Shv}(X)$. Every $O_{\bullet,X}\in\mathbf{Shv}(X)$ is cofibrant, and we will denote by $\mathcal O_{\bullet, X}$ a chosen functorial fibrant replacement. For any $U\in Open(X)$, $\mathcal O_{\bullet,X}(U,-)$ is a homotopy $C^\infty$-ring, i.e. (\ref{LocProduct}) is a weak equivalence $\forall m,n\geq 0$, and therefore we have a well defined sheaf of $C^\infty$-rings $\pi_0(\mathcal O_{\bullet,X})$, and a sequence of sheaves of $\pi_0(\mathcal O_{\bullet,X})$-modules $\{\pi_k(\mathcal O_{\bullet,X})\}_{k\geq 1}$. 

\smallskip

\underline{A weak equivalence in $\mathbf{RS}$} is a morphism $(\phi,\phi^\sharp):(X,O_{\bullet,X})\rightarrow(Y,O_{\bullet,Y})$, s.t. $\phi$ is a homeomorphism, and $\phi^\sharp: O_{\bullet,Y}\rightarrow\phi_*(O_{\bullet,X})$ is a weak equivalence in $\mathbf{Shv}(Y)$. Equivalently, we can demand that $\mathcal O_{\bullet,Y}\rightarrow\phi_*(\mathcal O_{\bullet,X})$ is a local weak equivalence, i.e. it induces isomorphisms 
	\begin{equation}\pi_k(\mathcal O_{\bullet,Y})\rightarrow\phi_*(\pi_k(\mathcal O_{\bullet,X})),\quad\forall k\geq 0.\end{equation}
We will denote by $\underline{\mathbf{RS}}$ the simplicial localization of $\mathbf{RS}$ with respect to these weak equivalences. Presence of simplicial closed model structure on each $\mathbf{Shv}(X)$ makes computing $\underline{\mathbf{RS}}$ easier than usual. 

\begin{proposition}\label{CalculusFractions} The category $\mathbf{RS}$, together with the subcategory of weak equivalences, admits a homotopy calculus of fractions.\end{proposition}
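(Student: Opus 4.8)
The plan is to invoke the general recognition theorem of Dwyer–Kan (and its refinement in the work on homotopy calculus of fractions, e.g. in the sense of hammocks) which says that a category with weak equivalences admits a homotopy calculus of (left, or right) fractions as soon as the weak equivalences, restricted to suitable slices, behave like those of a model category. The point to exploit is that $\mathbf{RS}$ is not itself a single model category, but it is \emph{fibered} over $\mathbf{CG}$: for each fixed $X$ the fiber $\mathbf{Shv}(X)$ carries a left proper simplicial closed model structure, and each continuous $\phi:X\to Y$ induces a Quillen adjunction $\phi^*\dashv\phi_*$. A weak equivalence in $\mathbf{RS}$ is by definition a morphism whose underlying map of spaces is a homeomorphism and whose structure-sheaf component is a weak equivalence in the relevant $\mathbf{Shv}(Y)$. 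So the weak equivalences of $\mathbf{RS}$ live entirely inside the fibers over the groupoid of homeomorphisms, and within each fiber they are model-categorical weak equivalences.

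The key steps, in order, are as follows. First I would make precise the decomposition of $\mathbf{RS}$: factor any morphism $(\phi,\phi^\sharp)$ as a ``horizontal'' part (induced by $\phi$, via the Quillen pair) followed by a ``vertical'' part living in a single fiber $\mathbf{Shv}(Y)$; a weak equivalence is one whose horizontal part is an iso (homeomorphism) and whose vertical part is a weak equivalence in $\mathbf{Shv}(Y)$. Second, for the vertical direction I would use that $\mathbf{Shv}(Y)$ is a closed model category, hence its weak equivalences admit a homotopy calculus of fractions (one can form cylinders, path objects, and factor/lift as needed; this is the standard fact that the hammock localization of a model category has the desired ``fractions'' form — zig-zags can be straightened to length-two spans or cospans up to homotopy). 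Third, for the horizontal direction the relevant weak equivalences are just isomorphisms of the indexing spaces, which trivially admit a calculus of fractions. Fourth, I would check that the two directions interact correctly: given a zig-zag in $\mathbf{RS}$, one pushes all the homeomorphisms to one side using the Quillen adjunctions $\phi^*\dashv\phi_*$ (a homeomorphism induces a Quillen \emph{equivalence}, in fact an equivalence of the fiber model categories), reducing an arbitrary zig-zag of weak equivalences to a zig-zag living over a single $X$, i.e. inside $\mathbf{Shv}(X)$, where the model-category calculus of fractions finishes the job. Concretely this means verifying the axioms for a homotopy calculus of (left or right) fractions: closure of weak equivalences under the relevant (homotopy) pushouts/pullbacks along arbitrary maps, the two-out-of-three property (immediate), and the existence of the required comparison diagrams — all of which reduce, after transport along homeomorphisms, to the corresponding model-category statements in $\mathbf{Shv}(X)$, using left properness there.

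The main obstacle I expect is the interaction step: ensuring that when a zig-zag mixes nontrivial continuous maps $\phi$ with fiberwise weak equivalences, one can genuinely commute the $\phi$'s past the weak equivalences without leaving the class of weak equivalences. This requires that $\phi^*$ and $\phi_*$ be suitably homotopical — i.e., that $\phi_*$ preserve the fibrant replacements $\mathcal O_{\bullet,X}$ well enough that ``$\phi^\sharp:\mathcal O_{\bullet,Y}\to\phi_*(\mathcal O_{\bullet,X})$ is a local weak equivalence'' is a statement stable under the manipulations — and, crucially, that a homeomorphism $\phi$ induces an \emph{equivalence} of model categories $\mathbf{Shv}(X)\simeq\mathbf{Shv}(Y)$, not merely a Quillen adjunction, so that the fibers can be identified and all weak equivalences in a zig-zag transported into one fiber. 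Once that identification is in hand, the remaining content is the classical fact that a (left proper, simplicial) model category has a homotopy calculus of fractions, applied to $\mathbf{Shv}(X)$, and the proof is essentially bookkeeping: straighten each hammock, fiber by fiber, using cylinder and path objects and the factorization axioms, then reassemble.
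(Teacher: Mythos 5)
Your overall strategy -- exploit the fiberwise model structures on the categories $\mathbf{Shv}(X)$ together with the Quillen adjunctions $\phi^*\dashv\phi_*$, and then invoke the Dwyer--Kan recognition criterion -- is the same as the paper's. But the concrete verification you propose has a genuine gap at the decisive step. You plan to check ``closure of weak equivalences under pushouts/pullbacks along arbitrary maps, using left properness.'' That statement is false even inside a single left proper model category: left properness only controls pushouts of weak equivalences along \emph{cofibrations}, whereas the squares one must complete for a calculus of fractions have an arbitrary morphism of $\mathbf{RS}$ as the other leg (in particular one whose underlying continuous map is not a homeomorphism, so it cannot be absorbed into a single fiber, and $\phi^*\dashv\phi_*$ is then only a Quillen adjunction, not an equivalence). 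Your ``push all homeomorphisms to one side'' reduction therefore handles columns of weak equivalences but not the mixed squares that actually carry the content of the fraction axioms.

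The paper's resolution, which your sketch is missing, is to first factor every weak equivalence of $\mathbf{RS}$ as a trivial cofibration followed by a trivial fibration (defined with a direction swap, since $\phi^\sharp$ points backwards: $(\phi,\phi^\sharp)$ is a trivial cofibration when $\phi^\sharp$ is a trivial fibration in $\mathbf{Shv}$, and vice versa), and then to verify the weaker, correct conditions: trivial cofibrations are closed under pushout along \emph{arbitrary} maps and trivial fibrations under pullback along arbitrary maps. This works because the relevant pushout in $\mathbf{RS}$ is computed explicitly as a \emph{pullback} of structure presheaves over the target space, namely $O_{\bullet,Y}\times_{\alpha_*(O_{\bullet,X})}(\alpha\phi^{-1})_*(O_{\bullet,X'})$, so one only needs that the right Quillen functor $(\alpha\phi^{-1})_*$ preserves trivial fibrations and that trivial fibrations are stable under base change (dually for pullbacks and left Quillen functors). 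No properness is needed. With these closure properties and two-out-of-three in hand, \cite{DK80}, Proposition 8.2 applies. You should also note that the existence of these (co)limits in $\mathbf{RS}$ has to be exhibited, not assumed; the explicit formula above does that.
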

\begin{proof} Let $(\phi,\phi^\sharp):(X,O_{\bullet,X})\rightarrow(Y,O_{\bullet,Y})$ be a weak equivalence in $\mathbf{RS}$, we will say that $(\phi,\phi^\sharp)$ is a trivial cofibration or trivial fibration, if correspondingly $\phi^\sharp$ is a trivial fibration or a trivial cofibration. Using closed model structure on $\mathbf{Shv}(Y)$, it is obvious that every weak equivalence in $\mathbf{RS}$ can be written as a composition of a trivial cofibration, followed by a trivial fibration.

Consider diagrams
	\begin{equation}\label{Squares}\xymatrix{(X,O_{\bullet,X})\ar[r]^{(\alpha,\alpha^\sharp)}\ar[d]_{(\phi,\phi^\sharp)} & 
	(Y,O_{\bullet,Y}) & & (Z,O_{\bullet,Z})\ar[d]^{(\psi,\psi^\sharp)}\\ (X',O_{\bullet,X'}) & & (W,O_{\bullet,W})\ar[r]_{(\beta,\beta^\sharp)} & (Z',O_{\bullet,Z'})}\end{equation}
where $(\phi,\phi^\sharp)$ is a trivial cofibration, and $(\psi,\psi^\sharp)$ is a trivial fibration. It is easy to see that 
	\begin{equation}(Y,O_{\bullet,Y}\underset{\alpha_*(O_{\bullet,X})}\prod(\alpha\phi^{-1})_*(O_{\bullet,X'})),\quad(W,O_{\bullet,W}\underset
	{\beta^*(O_{\bullet,Z'})}\coprod(\psi^{-1}\beta)^*(O_{\bullet,Z}))\end{equation}
are respectively colimit and limit of (\ref{Squares}). Since right Quillen functors preserve trivial fibrations, and left Quillen functors preserve trivial cofibrations, it is clear that pushout of $(\phi,\phi^\sharp)$ is a trivial cofibration, and pullback of $(\psi,\psi^\sharp)$ is a trivial fibration. Using the $2$-out-of-$3$ property, we see that, if in addition $(\alpha,\alpha^\sharp)$, $(\beta,\beta^\sharp)$ are weak equivalences, their pushout and pullback are weak equivalences as well. Use \cite{DK80}, proposition 8.2.\end{proof}

\smallskip

The same argument shows that any subcategory of $\mathbf{RS}$, defined by putting conditions on the sheaves of homotopy groups, also admits a homotopy calculus of fractions. We are interested in the following two subcategories:\begin{itemize}
\item let $\mathbf{LRS}\subset\mathbf{RS}$ be the full subcategory, consisting of pairs $(X,O_{\bullet,X})$, s.t. stalks of $\pi_0(\mathcal O_{\bullet,X})$ are local $C^\infty$-rings,
\item let $\mathbf{LRS}_{fgs}\subset\mathbf{LRS}$ be the full subcategory of pairs $(X,O_{\bullet,X})$, s.t. $\pi_0(\mathcal O_{\bullet,X})$ is a soft sheaf of finitely generated $C^\infty$-rings.\end{itemize}
As with $\mathbf{RS}$, we will denote by $\underline{\mathbf{LRS}}$, $\underline{\mathbf{LRS}}_{fgs}$ the simplicial localizations with respect to local weak equivalences. Because $\mathbf{RS}$ admits a homotopy calculus of fractions, we know that $Hom_{\underline{\mathbf RS}}((X,O_{\bullet,X}),(Y,O_{\bullet,Y})$ is weakly equivalent to simplicial set of hammocks of the following form:
	\begin{equation}\label{Hammock}\xymatrix{& (X',O_{\bullet,X'})\ar[ldd]\ar[r]\ar[d] & (Y',O_{\bullet,Y'})\ar[d] &\\
	& (X'',O_{\bullet,X''})\ar[ld]\ar[r]\ar[d] & (Y'',O_{\bullet,Y''})\ar[d] &\\
	(X,O_{\bullet,X}) &\ar@{.}[dd]&\ar@{.}[dd]& (Y,O_{\bullet,Y})\ar[luu]\ar[lu]\ar[ldd]\\
	&&&\\
	& (X^{(n)},O_{\bullet,X^{(n)}})\ar[luu]\ar[r] & (Y^{(n)},O_{\bullet,Y^{(n)}}) &}\end{equation}
where vertical arrows, and arrows going to the left are weak equivalences. The same is true for $\mathbf{LRS}$ and $\mathbf{LRS}_{f g s}$. Since $\mathbf{LRS}$, $\mathbf{LRS}_{f g s}$ are full subcategories of $\mathbf{RS}$, defined by a condition on weak equivalence classes, we immediately have the following result.

\begin{proposition} The inclusions $\underline{\mathbf{LRS}}_{fgs}\subset\underline{\mathbf{LRS}}\subset\underline{\mathbf{RS}}$ induce weak equivalences on the spaces of morphisms.\end{proposition}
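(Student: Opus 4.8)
The plan is to deduce this statement directly from the homotopy calculus of fractions established in Proposition \ref{CalculusFractions} together with the remark immediately following it. Recall that the key output of a homotopy calculus of fractions (via \cite{DK80}, Proposition 8.2) is an explicit model for the mapping spaces of the simplicial localization: for any two objects $(X,O_{\bullet,X})$, $(Y,O_{\bullet,Y})$ of $\mathbf{RS}$, the space $Hom_{\underline{\mathbf{RS}}}((X,O_{\bullet,X}),(Y,O_{\bullet,Y}))$ is weakly equivalent to the nerve of the category of hammocks of the form (\ref{Hammock}), where the vertical arrows and the leftward-pointing arrows are weak equivalences. The same description holds for $\mathbf{LRS}$ and $\mathbf{LRS}_{fgs}$ since they also admit a homotopy calculus of fractions. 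So I would first fix two objects of $\mathbf{LRS}_{fgs}$ and compare the three hammock categories built, respectively, inside $\mathbf{LRS}_{fgs}$, $\mathbf{LRS}$, and $\mathbf{RS}$.

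Next I would observe the crucial point that makes the comparison work: $\mathbf{LRS}$ and $\mathbf{LRS}_{fgs}$ are \emph{full} subcategories of $\mathbf{RS}$, and membership in either one is detected purely on the associated sheaves of homotopy groups $\pi_k(\mathcal O_{\bullet,X})$ — hence it is invariant under the weak equivalences of $\mathbf{RS}$ (these are exactly the maps inducing isomorphisms on all $\pi_k$ of the fibrant replacements). Consequently, given a hammock (\ref{Hammock}) whose two ends $(X,O_{\bullet,X})$ and $(Y,O_{\bullet,Y})$ lie in $\mathbf{LRS}_{fgs}$, every intermediate object $(X^{(j)},O_{\bullet,X^{(j)}})$ is connected to $X$ by a zig-zag of weak equivalences (all the leftward/vertical arrows) and therefore also lies in $\mathbf{LRS}_{fgs}$; similarly every $(Y^{(j)},O_{\bullet,Y^{(j)}})$ lies in $\mathbf{LRS}_{fgs}$. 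The only arrows in the hammock not known a priori to be weak equivalences are the horizontal $(X^{(j)},O_{\bullet,X^{(j)}})\to(Y^{(j)},O_{\bullet,Y^{(j)}})$, but those have both source and target already inside $\mathbf{LRS}_{fgs}$, so the entire hammock lives in $\mathbf{LRS}_{fgs}$. Thus the hammock category computed in $\mathbf{RS}$ with fixed endpoints in $\mathbf{LRS}_{fgs}$ coincides with the one computed in $\mathbf{LRS}_{fgs}$, and likewise for $\mathbf{LRS}$; this yields the claimed weak equivalences on mapping spaces. The inclusions are also clearly essentially surjective onto their images and fully faithful at the level of these mapping-space models, so no further bookkeeping about path components is needed.

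The main obstacle — really the only thing that needs care — is verifying that the hammock localization is genuinely insensitive to the ambient category, i.e. that restricting to a full subcategory closed under the relevant weak equivalences does not change the hammock mapping spaces. The subtlety is that a hammock in $\mathbf{RS}$ between objects of $\mathbf{LRS}_{fgs}$ could, in principle, pass through objects outside $\mathbf{LRS}_{fgs}$; the argument above rules this out because the defining conditions are conditions on weak-equivalence classes and the hammock's vertical and leftward arrows are weak equivalences, forcing every vertex into the subcategory. One should also check that the functor on hammock categories induced by the inclusion is an equivalence of categories (not merely essentially surjective), which follows since the inclusion $\mathbf{LRS}_{fgs}\hookrightarrow\mathbf{RS}$ is full and the classes of weak equivalences agree by restriction. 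Granting Proposition \ref{CalculusFractions} and its extension to the two subcategories, the rest is the formal nerve-of-hammocks comparison, and I would simply cite \cite{DK80} for the identification of these nerves with $Hom_{\underline{\mathbf{RS}}}$, $Hom_{\underline{\mathbf{LRS}}}$, and $Hom_{\underline{\mathbf{LRS}}_{fgs}}$.
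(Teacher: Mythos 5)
Your proposal is correct and follows essentially the same route as the paper: the paper likewise deduces the result immediately from the homotopy calculus of fractions (Proposition \ref{CalculusFractions} and the remark extending it to the subcategories), using the hammock description of mapping spaces and the observation that $\mathbf{LRS}$ and $\mathbf{LRS}_{fgs}$ are full subcategories defined by conditions invariant under weak equivalence, so that every vertex of a hammock with endpoints in the subcategory already lies in the subcategory. Your explicit check that the leftward and vertical arrows force the $X^{(j)}$ and $Y^{(j)}$ into the subcategory, with fullness handling the horizontal arrows, is exactly the content the paper leaves implicit.
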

So far we have used only presence of a closed model structure on each $\mathbf{Shv}(X)$. Now we will use the fact that these closed model structures are simplicial. For any $(X,O_{\bullet,X})$, $(Y,O_{\bullet,Y})\in\mathbf{RS}$ we have a simplicial set
	\begin{equation}\underset{\phi\in Hom_{\mathbf{CG}}(X,Y)}\coprod\quad\underset{k\geq 0}\coprod
	Hom_{\mathbf{Shv}(Y)}(O_{\bullet,Y}\otimes\Delta[k],\phi_*(\mathcal O_{\bullet,X})),\end{equation}
which we will denote by $\underline{Hom}_{\mathbf{RS}}((X,O_{\bullet,X}),(Y,O_{\bullet,Y}))$.

\begin{proposition} For any $(X,O_{\bullet,X})$, $(Y,O_{\bullet,Y)}\in\mathbf{RS}$ there is a weak equivalence of simplicial sets:
	\begin{equation}\underline{Hom}_{\mathbf{RS}}((X,O_{\bullet,X}),(Y,O_{\bullet,Y}))\simeq Hom_{\underline{\mathbf{RS}}}((X,O_{\bullet,X}),(Y,O_{\bullet,Y})).\end{equation}
\end{proposition}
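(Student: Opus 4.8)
The plan is to compare the two simplicial sets $\underline{Hom}_{\mathbf{RS}}$ and $Hom_{\underline{\mathbf{RS}}}$ by exploiting the homotopy calculus of fractions established in Proposition \ref{CalculusFractions} together with the simpliciality of each model category $\mathbf{Shv}(X)$. First I would recall that, by the Dwyer--Kan theory used in the proof of Proposition \ref{CalculusFractions} (\cite{DK80}), the mapping space $Hom_{\underline{\mathbf{RS}}}((X,O_{\bullet,X}),(Y,O_{\bullet,Y}))$ is modeled by the simplicial set of reduced hammocks of the form (\ref{Hammock}), where the leftward and vertical arrows are weak equivalences in $\mathbf{RS}$. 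The key structural fact I would invoke, also from the proof of Proposition \ref{CalculusFractions}, is that every weak equivalence in $\mathbf{RS}$ factors as a trivial cofibration followed by a trivial fibration, and that trivial cofibrations are stable under pushout along arbitrary maps while trivial fibrations are stable under pullback — this is precisely what makes the two-sided calculus of fractions work.

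Next I would reduce the length of the hammocks. Using the pushout-stability of trivial cofibrations, any zig-zag on the source side can be replaced by a single backward weak equivalence $(X',O_{\bullet,X'})\to(X,O_{\bullet,X})$ that is a trivial fibration; dually, any zig-zag on the target side can be absorbed into a single forward map out of a trivial-cofibrant replacement. Concretely, I would argue that the hammock complex deformation-retracts onto the subcomplex of hammocks of width one, i.e. spans of the shape
\begin{equation}
(X,O_{\bullet,X})\xleftarrow{\;\sim\;}(X',O_{\bullet,X'})\longrightarrow(Y,O_{\bullet,Y}),
\end{equation}
with the backward arrow a weak equivalence. Here I can even take the backward arrow to have underlying map a homeomorphism (since weak equivalences in $\mathbf{RS}$ have underlying homeomorphisms), so $X'=X$ and the span is determined by a zig-zag $O_{\bullet,X}\xleftarrow{\sim} O'_{\bullet,X}\to\phi_*(O_{\bullet,Y})$ in $\mathbf{Shv}(X)$ — but this is exactly the homotopy-function-complex description of maps in the model category $\mathbf{Shv}(X)$ after localizing, namely $\mathrm{map}_{\mathbf{Shv}(X)}(O_{\bullet,X},\phi_*(\mathcal O_{\bullet,X}^{fib}))$ computed via a cofibrant source and fibrant target. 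Since $O_{\bullet,X}$ is already cofibrant and $\mathcal O_{\bullet,Y}=\mathcal O_{\bullet,Y}^{fib}$ is the chosen fibrant replacement, and $\mathbf{Shv}(X)$ is a \emph{simplicial} model category, this homotopy function complex is weakly equivalent to the honest simplicial mapping space $Hom_{\mathbf{Shv}(X)}(O_{\bullet,X}\otimes\Delta[\bullet],\phi_*(\mathcal O_{\bullet,X}))$. Summing over $\phi\in Hom_{\mathbf{CG}}(X,Y)$ and over the simplicial direction $k\geq 0$ then yields precisely $\underline{Hom}_{\mathbf{RS}}((X,O_{\bullet,X}),(Y,O_{\bullet,Y}))$, giving the desired weak equivalence.

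The main obstacle I anticipate is the reduction of arbitrary hammocks to width-one spans in a way that is natural enough to produce an actual weak equivalence of simplicial sets, not merely a bijection on $\pi_0$. The subtlety is that the source and target of a morphism in $\mathbf{RS}$ sit over possibly different compactly generated spaces, so "pushing out" a trivial cofibration of presheaves requires first transporting along the underlying homeomorphism via the Quillen pairs $\phi^*\dashv\phi_*$; one must check that these base-change functors interact correctly with the trivial-(co)fibration factorizations, which is where left/right Quillen functoriality of $\phi^*,\phi_*$ is used. I would handle this by working throughout over a \emph{fixed} base $X$ after observing that any weak equivalence has underlying homeomorphism, so that the category of weak equivalences with fixed target $(X,-)$ is literally the category of weak equivalences in $\mathbf{Shv}(X)$; then the hammock-reduction is the standard simplicial-model-category argument (as in \cite{DK80}, \cite{Hi09}) that the Dwyer--Kan mapping space agrees with the simplicial mapping space between cofibrant and fibrant objects. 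The role of the fibrant replacement $\mathcal O_{\bullet,X}$ is essential here: it is what converts the homotopy-invariant hammock description into the strict simplicial-hom description appearing in the statement.
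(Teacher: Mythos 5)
Your overall strategy --- reduce the hammock description of $Hom_{\underline{\mathbf{RS}}}$ to short hammocks lying over a single underlying continuous map $\phi$, transport the problem into one fixed sheaf category, and then invoke the Dwyer--Kan comparison between homotopy function complexes and simplicial mapping spaces for a cofibrant source and fibrant target --- is the same as the paper's. But there is a concrete variance error in your identification step. A morphism $(X,O_{\bullet,X})\rightarrow(Y,O_{\bullet,Y})$ in $\mathbf{RS}$ is a sheaf map $O_{\bullet,Y}\rightarrow\phi_*(O_{\bullet,X})$, so the $\mathbf{RS}$-hom is \emph{contravariant} in the structure presheaves: your span $(X,O_{\bullet,X})\xleftarrow{\sim}(X,O'_{\bullet,X})\rightarrow(Y,O_{\bullet,Y})$ unpacks, after pushing forward along $\phi$, to a \emph{cospan} $O_{\bullet,Y}\rightarrow\phi_*(O'_{\bullet,X})\xleftarrow{\sim}\phi_*(O_{\bullet,X})$ in $\mathbf{Shv}(Y)$, not to the zig-zag $O_{\bullet,X}\xleftarrow{\sim}O'_{\bullet,X}\rightarrow\phi_*(O_{\bullet,Y})$ you wrote (which is also ill-typed, since $\phi_*$ goes from $\mathbf{Shv}(X)$ to $\mathbf{Shv}(Y)$). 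Consequently the mapping space you should land on is $\coprod_k Hom_{\mathbf{Shv}(Y)}(O_{\bullet,Y}\otimes\Delta[k],\phi_*(\mathcal O_{\bullet,X}))$ --- source $O_{\bullet,Y}$ (cofibrant, as everything is in the injective structure), target $\phi_*(\mathcal O_{\bullet,X})$ (fibrant, since $\phi_*$ is right Quillen) --- which is exactly the definition of $\underline{Hom}_{\mathbf{RS}}$; your formula $\mathrm{map}_{\mathbf{Shv}(X)}(O_{\bullet,X},\phi_*(\mathcal O^{fib}_{\bullet,X}))$ has both the ambient category and the direction wrong. This is fixable, but as written the two sides of your claimed equivalence do not match.

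On the reduction to short hammocks: you flag this as the main obstacle and propose to handle it by the ``standard simplicial model category argument,'' but $\mathbf{RS}$ is not a model category, so that argument does not apply directly to it. The paper's route around this is worth noting: it observes that, because cofibrations in $\mathbf{Shv}(X)$ are monomorphisms, closing the class of trivial fibrations of $\mathbf{RS}$ under 2-out-of-3 already yields all weak equivalences; hence $\underline{\mathbf{RS}}$ can be computed by localizing at trivial fibrations alone, which admit a calculus of homotopy \emph{right} fractions (\cite{DK80}, Prop.~8.1). The resulting one-sided hammocks have all intermediate objects on the $Y$ side, every path has the same underlying map $\phi$ (giving the coproduct decomposition over $\phi$ for free), and the whole hammock complex transports bijectively into $\mathbf{Shv}(Y)$, where \cite{DK80b} applies. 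Your two-sided span reduction can be made to work, but you would need to justify it from \cite{DK80} Prop.~8.2 directly rather than from model-category generalities, and you would still need the observation that all horizontal maps in a hammock share the same underlying $\phi$ before you can collapse to a single $\mathbf{Shv}(Y)$.
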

\begin{proof} Recall from the proof of Proposition \ref{CalculusFractions}, that $(\phi,\phi^\sharp):(X,O_{\bullet,X})\rightarrow(Y,O_{\bullet,Y})$ is a trivial fibration, if $\phi$ is a homeomorphism, and $\phi^\sharp:O_{\bullet,Y}\rightarrow\phi^*(O_{\bullet,X})$ is a trivial cofibration. Since cofibrations in each $\mathbf{Shv}(X)$ are just injective maps, it is easy to see that closing trivial fibrations in $\mathbf{RS}$ with respect to the $2$-out-of-$3$ property, produces all weak equivalences. Hence $\underline{\mathbf{RS}}$ can be computed as simplicial localization of $\mathbf{RS}$ with respect to trivial fibrations. 

Proceeding as in the proof of Proposition \ref{CalculusFractions}, one sees that $\mathbf{RS}$, together with trivial fibrations, admits a calculus of homotopy right fractions (\cite{DK80}, proposition 8.1). Therefore, $Hom_{\underline{\mathbf{RS}}}((X,O_{\bullet,X}),(Y,O_{\bullet,Y})$ is weakly equivalent to simplicial set of hammocks of the following form:
	\begin{equation}\label{SmallHammock}\xymatrix{ & (Y',O_{\bullet,Y'})\ar@{<-}[ldd]\ar[d] &\\
	& (Y'',O_{\bullet,Y''})\ar@{<-}[ld]\ar[d] &\\
	(X,O_{\bullet,X}) & \ar@{.}[dd] & (Y,O_{\bullet,Y})\ar[luu]\ar[lu]\ar[ldd]\\
	&&&\\
	& (Y^{(n)},O_{\bullet,Y^{(n)}})\ar@{<-}[luu] &}\end{equation}
where vertical arrows, and arrows going to the left are trivial fibrations. Moreover, since nerves of equivalent categories are weakly equivalent, we can assume that $Y^{(k)}=Y$ $\forall k\geq 1$. 

It is easy to see that in each such hammock, every path from $X$ to $Y$ has the same underlying continuous map $\phi:X\rightarrow Y$, and hence we have a decomposition of simplicial sets
	\begin{equation}Hom_{\underline{\mathbf{RS}}}((X,O_{\bullet,X}),(Y,O_{\bullet,Y}))=\underset{\phi}\coprod\text{ }
	Hom_{\underline{\mathbf{RS}}}((X,O_{\bullet,X}),(Y,O_{\bullet,Y}))_\phi.\end{equation}
Fix a $\phi$, using functorial fibrant replacement in each $\mathbf{Shv}(X)$, we can assume that all pre-sheaves in (\ref{SmallHammock}) are fibrant. Pushing forward everything to $Y$, we see that (\ref{SmallHammock}) becomes a hammock between $(Y,\phi_*(\mathcal O_{\bullet,X}))$ and $(Y,\mathcal O_{\bullet,Y})$ in $\mathbf{Shv}(Y)$, and this correspondence is bijective, i.e. we have
	\begin{equation}Hom_{\underline{\mathbf{RS}}}((X,\mathcal O_{\bullet,X}),(Y,\mathcal O_{\bullet,Y}))\simeq 
	Hom_{\underline{\mathbf{Shv}(Y)}}(\mathcal O_{\bullet,Y},\phi_*(\mathcal O_{\bullet,X})),\end{equation}
where $\underline{\mathbf{Shv(Y)}}$ is the simplicial localization of $\mathbf{Shv}(Y)$ with respect to trivial cofibrations. Finally, since $\mathcal O_{\bullet,Y}$ is cofibrant, and $\phi_*(\mathcal O_{\bullet,X})$ is fibrant, we have 
\begin{equation}Hom_{\underline{\mathbf{Shv}(Y)}}(\mathcal O_{\bullet,Y},\phi_*(\mathcal O_{\bullet,X}))\simeq\underset{k\geq 0}\coprod
	Hom_{\mathbf{Shv}(Y)}(\mathcal O_{\bullet,Y}\otimes\Delta[k],\phi_*(\mathcal O_{\bullet,X}))\end{equation}
Here we use \cite{DK80b}, proposition 5.2, corollary 4.7.\end{proof}
	
\smallskip

In \cite{Sp10} $\mathbf{LRS}$ is equipped with simplicial structure, which we will denote by $\widehat{\mathbf{LRS}}$. We have seen now that $\widehat{\mathbf{LRS}}$ is weakly equivalent to $\underline{\mathbf{LRS}}$, that we have constructed here. Therefore, all constructions involving homotopy limits (e.g. derived manifolds), that one can perform in $\widehat{\mathbf{LRS}}$, can be equivalently performed in $\underline{\mathbf{LRS}}$. 

Of course, the advantage of Spivak's construction is that it is more manageable, than simplicial localization. Now we show that there is another model, which is more manageable still.

\smallskip

Recall (\cite{Sp10}) that \underline{an affine derived manifold} is a homotopy limit (in $\widehat{\mathbf{LRS}}$) of a diagram
	\begin{equation}\label{AffineDerived}\xymatrix{& \mathbb R^0\ar[d]\\ \mathbb R^m\ar[r] &\mathbb R^n}\end{equation}
Since $\underline{\mathbf{LRS}}\simeq\widehat{\mathbf{LRS}}$, we can use $\underline{\mathbf{LRS}}$ instead. Moreover, if $(X,\mathcal O_{\bullet,X})$ is the homotopy pullback of (\ref{AffineDerived}), then $\pi_0(\mathcal O_{\bullet,X})$ is a soft sheaf of finitely generated $C^\infty$-rings. Therefore affine derived manifolds lie in the full subcategory $\underline{\mathbf{LRS}}_{f g s}\subset\underline{\mathbf{LRS}}$.

Arbitrary derived manifolds in \cite{Sp10} are defined by gluing affine ones. So if we restrict to derived manifolds \underline{of finite type}, i.e. $(X,\mathcal O_{\bullet,X})$ such that $\pi_0(\mathcal O_{\bullet,X})$ is a sheaf of finitely generated $C^\infty$-rings, we are still inside $\underline{\mathbf{LRS}}_{f g s}$.

\smallskip

There is an obvious (full) inclusion 
	\begin{equation}\label{RectInclusion} s\mathbb G\subset\mathbf{LRS}_{fgs}.\end{equation} 
We claim that (\ref{RectInclusion}) induces a weak equivalence of simplicial localizations (with respect to local weak equivalences). Indeed, any functor $\mathcal C^\infty\rightarrow SSet$ can be rectified to a simplicial $C^\infty$-ring, i.e. it can be changed into a product preserving functor. Moreover, this process is functorial, and it preserves homotopy type (\cite{Be06}). 

Let $<s\mathbb G>\subset\mathbf{LRS}$ be the full subcategory, consisting of $s\mathbb G$ and all pairs $(X,O_{\bullet,X})$, s.t. $O_{\bullet,X}$ is weakly product preserving and cofibrant in the {\it projective} closed model structure on $\mathbf{RS}(X)$. Then $s\mathbb G\subseteq<s\mathbb G>$ induces a weak equivalence between simplicial localizations with respect to local weak equivalences (\cite{Lu09}, lemma 5.5.9.9). On the other hand, using cofibrant replacement functor $\mathbf{LRS}\rightarrow\mathbf{LRS}$ (with respect to the projective model structures on $\mathbf{RS}(X)$), we conclude that $<s\mathbb G>\subset\mathbf{LRS}_{fgs}$ also induces a weak equivalences between simplicial localizations. 

\smallskip

Altogether, using results of Section \ref{SecondSection}, we now know that taking homotopy limit of (\ref{AffineDerived}) in $\widehat{\mathbf{LRS}}$ is equivalent to doing the same in $s\mathcal G^{op}$, or equivalently in $s\mathcal L^{op}$, which has the usual (projective) closed model structure. Therefore: 

\begin{theorem} The simplicial category of derived manifolds of finite type, as defined in \cite{Sp10}, is weakly equivalent to the full subcategory of $s\mathcal G^{op}$, consisting of objects, that locally are homotopy limits (taken in $s\mathcal L$) of (\ref{AffineDerived}).\end{theorem}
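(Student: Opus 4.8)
The plan is to assemble the theorem from the chain of weak equivalences of simplicial localizations already established in the excerpt, so that the real content is bookkeeping: identifying which subcategories match up and checking that the equivalences restrict correctly. First I would recall the backbone. On Spivak's side, Section \ref{ThirdSection} gives $\widehat{\mathbf{LRS}}\simeq\underline{\mathbf{LRS}}$, and the observation that affine derived manifolds (homotopy pullbacks of (\ref{AffineDerived})) and hence all derived manifolds of finite type land in $\underline{\mathbf{LRS}}_{fgs}$; moreover $\underline{\mathbf{LRS}}_{fgs}\subset\underline{\mathbf{LRS}}$ induces a weak equivalence on mapping spaces. Then the rectification discussion shows $s\mathbb G\subset\mathbf{LRS}_{fgs}$ induces a weak equivalence of simplicial localizations (factoring through $<s\mathbb G>$, using \cite{Be06}, \cite{Lu09} and the projective cofibrant replacement). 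So the simplicial category of derived manifolds of finite type is weakly equivalent to the full simplicial subcategory of $\underline{s\mathbb G}$ spanned by objects locally equivalent to homotopy limits of (\ref{AffineDerived}) taken in $s\mathbb G$.

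Next I would transport this across the adjunctions of Section \ref{SecondSection}. Proposition \ref{LocalizationFunctor} gives $\overline{s\mathbb G}\rightleftarrows s\mathbb G$ with unit an isomorphism and counit a local weak equivalence, and the final adjunction $\overline{s\mathbb G}\rightleftarrows s\mathcal G^{op}$ (via $\mathbf{Sp}$ and $\Gamma$) has unit an isomorphism and counit a weak equivalence; hence the simplicial localization of $s\mathcal G^{op}$ at weak equivalences is weakly equivalent to the simplicial localization of $s\mathbb G$ at local weak equivalences, exactly as stated in the excerpt. The one point needing care here is that these equivalences of localizations must carry the distinguished class of objects ("locally a homotopy limit of (\ref{AffineDerived})") to the corresponding class on the other side. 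For this I would use that $\mathbf\Gamma$ (and its quasi-inverse $\mathbf{Sp}$) preserve finite homotopy limits up to weak equivalence — being equivalences of the localized simplicial categories — so a homotopy limit of (\ref{AffineDerived}) computed in $s\mathbb G$ corresponds to the homotopy limit of the image diagram in $s\mathcal G^{op}$; and since $\mathbb R^0,\mathbb R^m,\mathbb R^n$ as objects of $s\mathbb G$ go to (the constant simplicial rings) $C^\infty(\mathbb R^0),C^\infty(\mathbb R^m),C^\infty(\mathbb R^n)$ in $s\mathcal G^{op}$, the diagram (\ref{AffineDerived}) is taken to the diagram (\ref{AffineDerived}) read in $s\mathcal G^{op}$. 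Locality ("locally" in both statements) is preserved because the equivalence $\underline{s\mathbb G}\simeq\underline{s\mathcal G^{op}}$ is, stalk-wise/over opens, compatible with the $\mathbf{Sp}$–$\Gamma$ correspondence of opens and basic localizations (Proposition \ref{SpecGerm}, and the functoriality of $\mathbf{Sp}$ on $U_a$).

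Finally I would note that homotopy limits of (\ref{AffineDerived}) may equivalently be computed in $s\mathcal L^{op}$ rather than $s\mathcal G^{op}$: the inclusion $s\mathcal G\subset s\mathcal L$ is a reflective subcategory whose reflection $A_\bullet\mapsto\overline{A}_\bullet$ (Proposition \ref{FullLocalization}, germ-determined replacement on $\pi_\bullet$) is a weak equivalence, so the induced map on simplicial localizations is an equivalence and it commutes with homotopy limits; $s\mathcal L^{op}$ carries the usual projective closed model structure, which is where one wants to work. Stringing the equivalences together — $\widehat{\mathbf{LRS}}$-derived-manifolds-of-finite-type $\simeq$ the relevant subcategory of $\underline{\mathbf{LRS}}_{fgs}$ $\simeq$ of $\underline{s\mathbb G}$ $\simeq$ of $\underline{s\mathcal G^{op}}$, with (\ref{AffineDerived}) preserved throughout — yields the theorem. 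I expect the main obstacle to be precisely the middle step: verifying that the abstract equivalence of simplicial localizations $\underline{s\mathbb G}\simeq\underline{s\mathcal G^{op}}$ genuinely sends the geometrically-defined subcategory "locally a homotopy pullback of $\mathbb R^m\to\mathbb R^n\leftarrow\mathbb R^0$" to its algebraic counterpart, i.e. that it interacts correctly with both the finite homotopy limits defining affine pieces and the open covers used in the gluing — everything else is assembling citations and the adjunctions already proved.
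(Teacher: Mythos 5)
Your proposal is correct and follows essentially the same route as the paper: the theorem there is stated as the conclusion of exactly the chain you describe ($\widehat{\mathbf{LRS}}\simeq\underline{\mathbf{LRS}}$, restriction to $\underline{\mathbf{LRS}}_{fgs}$, rectification through $<s\mathbb G>$ into $s\mathbb G$, and the two adjunctions of Section \ref{SecondSection} identifying $\underline{s\mathbb G}$ with the localization of $s\mathcal G^{op}$, hence of $s\mathcal L^{op}$). Your added care about the equivalences carrying the diagram (\ref{AffineDerived}) and the ``locally'' condition across to the algebraic side is a point the paper leaves implicit, but it is handled correctly.
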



\end{document}